\newcommand{\reff}[1]{(\ref{#1})}
\theoremstyle{plain}
\newtheorem{theo}{Theorem}[section]
\newtheorem{theo*}{Theorem}
\newtheorem{cor}[theo]{Corollary}
\newtheorem{lem}[theo]{Lemma}
\newtheorem{defi}[theo]{Definition}
\theoremstyle{remark}
\newtheorem{rem}[theo]{Remark}
\newtheorem{ex}[theo]{Example}
\newcommand{\cb}{{\mathcal B}}
\newcommand{\ce}{{\mathcal E}}
\newcommand{\cf}{{\mathcal F}}
\newcommand{\cl}{{\mathcal L}}
\newcommand{\cR}{{\mathcal R}}
\newcommand{\E}{{\mathbb E}}
\newcommand{\N}{{\mathbb N}}
\renewcommand{\P}{{\mathbb P}}
\newcommand{\R}{{\mathbb R}}
\newcommand{\T}{{\mathbb T}}
\newcommand{\bt}{{\mathbf t}}
\newcommand{\bff}{{\mathbf f}}
\newcommand{\ind}{{\bf 1}}
\newcommand{\clo}{{\rm cl}\;}
\newcommand{\Card}{{\rm Card}\;}
\newcommand{\expp}[1]{\mathop {\mathrm{e}^{ #1}}}
\newcommand{\lb}{[\![}
\newcommand{\rb}{]\!]}
\title[Reversal property of the Brownian tree]{Reversal property of the Brownian tree}
\date{\today}
\author{Romain Abraham}
\address{Romain Abraham,
 Laboratoire MAPMO, CNRS, UMR 7349,
 F\'ed\'eration Denis Poisson, FR 2964,
  Universit\'{e} d'Orl\'{e}ns,
 B.P. 6759,
 45067 Orl\'{e}ns cedex 2,
France}
\email{romain.abraham@univ-orleans.fr}
\author{Jean-Fran\c{c}ois Delmas}
\address{Jean-Fran\c{c}ois Delmas,
 Universit\'{e} Paris-Est, CERMICS (ENPC), F-77455, France}
\email{delmas@cermics.enpc.fr}
\begin{document}

\subjclass[2010]{60J80,60J85}

\keywords{Stationary branching processes, Real trees, Genealogical trees, Ancestral process, Simulation}

\begin{abstract}
  We  consider the Brownian tree introduced by Aldous and the associated
  Q-process which consists in an infinite spine on which are grafted
  independent Brownian trees.  We present a reversal procedure on
  these trees that consists in looking
  at  the tree  downward from  its  top: the  branching points  becoming
  leaves  and  leaves  becoming  branching points.  We  prove  that  the
  distribution of the tree is invariant under this reversal
  procedure, which  provides a better understanding  of previous results
  from Bi and Delmas (2016).
\end{abstract}

\maketitle

\section{Introduction}

Continuous state branching (CB)  processes are stochastic processes that
can  be obtained  as the  scaling limits  of sequences  of Galton-Watson
processes when the initial number of individuals tends to infinity. They
hence  can be  seen as  a model  for a  large branching  population. The
genealogical structure of  a CB process can be described  by a continuum
random tree (CRT) introduced  first by Aldous \cite{a:crtI}  for the quadratic
critical  case, see  also  Le  Gall and  Le  Jan \cite{lglj:bplpep}  and
Duquesne and  Le Gall  \cite{dlg:rtlpsbp} for  the general  critical and
sub-critical  cases,  and Abraham and Delmas \cite{ad:ctv}  for  the  super-critical
case. We shall only consider the  quadratic case; it is characterized by
a branching mechanism $\psi_\theta$:
\[
\psi_\theta(\lambda)=\beta \lambda^ 2+ 2\beta\theta \lambda, \quad
\lambda\in [0, +\infty ),
\]
where $\beta>0$ and $\theta\in \R$. The sub-critical (resp. critical)
case corresponds to $\theta>0$ (resp. 
$\theta=0$). 
The parameter $\beta$ can be seen as a time scaling
parameter, and $\theta$ as a population size parameter. 

In  this  model the  population  dies  out  a.s.   in the  critical  and
sub-critical cases.   In   order  to   model  branching
population with stationary size  distribution, which corresponds to what
is observed  at an  ecological equilibrium, one  can simply  condition a
sub-critical  or  a  critical  CB  to   not  die  out.   This  gives  a
Q-process,  see  Roelly-Coppoleta  and Rouault  \cite{rcr:pdwcfl}  and
Lambert \cite{l:qsdcsbpcne},  which can also  be viewed  as a CB  with a
specific immigration.   The genealogical  structure of the  Q-process in
the stationary regime is a tree with an infinite spine.  This  infinite spine  has to  be removed  if one
adopts  the immigration  point of  view, in  this case  the genealogical
structure  can be  seen  as  a forest  of  trees.   For $\theta>0$,  let
$(Z_t, t\in  \R)$ be this  Q-process in  the stationary regime,  so that
$Z_t$ is  the size of  the population at time  $t\in \R$.  See  Chen and
Delmas \cite{cd:spsmrcatsbp} for studies on this model in a more general
framework. Let $A_t$  be the time to the most  recent common ancestor of
the population living  at time $t$.     According    to     \cite{cd:spsmrcatsbp},    we    have
$\E[Z_t]=1/\theta$,  and $\E[A_t]=3/4\beta\theta$,  so that  $\theta$ is
indeed a population size
parameter and $\beta$ is a time parameter.

\medskip
For $s<t$,  let $M^t_s$  be   the number of individuals  at time $s$
who  have descendants  at time  $t$.   It is  proven in  Bi and  Delmas
\cite{bd:tl}, that for fixed $\theta>0$  a time reversal property holds: in the stationary regime,
the ancestor process $((M^s_{s-r}, r>0), s\in \R)$ is distributed as the
descendant process $((M^{s+r}_s,  r>0), s\in \R)$, see Remark \ref{rem:bd}.  This paper extends  and explains this
identity in law by reversing the  genealogical tree.  The idea is to see
the tree as ranked branches, with each branch being attached to a longer
one (the longest being the infinite spine). Then, re-attach every branch
by its  highest point  on the same  branch. Hence, branching points  become leaves  and leaves
become branching  points.  Call  this operation the  reversal procedure.
Theorem \ref{theo:reversed_CRT} states that, for $\theta\geq 0$, the Brownian CRT
distribution is invariant by the reversal procedure and 
Corollary  \ref{cor:-forest}  states  that,  for  $\theta\geq  0$,  the
distribution  of  the  genealogical  structure  of  the   Q-process  in  the
stationary regime  is also invariant by  the reversal procedure. See a
similar result  in the discrete setting of splitting trees in D\'avila
Felipe and Lambert \cite{dl:trdrf}.

The  paper is  organized  as  follows.  We  first  introduce in  Section
\ref{sec:notations}  the  framework of  real  trees  and we  define  the
Brownian CRT  that describes the  genealogy of  the CB in  the quadratic
case. We define in Section \ref{sec:reverse} the reversal procedure of a
tree and  prove the invariance property  of the Brownian CRT  under this
reversal procedure.   We then extend  the result to the  Brownian forest
that  describes  the  genealogy  of the  stationary  population  in  the
quadratic  (critical and  sub-critical)  case.  

\section{Notations}\label{sec:notations}

\subsection{Real trees}
\label{sec:real}
The study of real trees has been motivated by algebraic and geometric purposes.
See in particular the survey \cite{dmt:tto}. It has been first used in
\cite{epw:rprtrgr} to study random continuum trees, see also
\cite{e:prt}.

\begin{defi}[Real tree]
\label{defi:realtree}
A real tree is a metric space $(\bt,d_\bt)$ such that:
\begin{itemize}
\item[(i)] For every $x,y\in\bt$, there is a unique isometric map $f_{x,y}$
  from $[0,d_\bt(x,y)]$ to $\bt$ such that $f_{x,y}(0)=x$ and $f_{x,y}(d_\bt(x,y))=y$.
\item[(ii)] For every $x,y\in\bt$, if $\phi$ is a continuous injective map from $[0,1]$ to $\bt$ such that $\phi(0) = x$ and
$\phi(1) = y$, then $\phi([0, 1]) = f_{x,y}([0; d_\bt(x,y)])$.
\end{itemize}
\end{defi}

Notice that a real tree is a length space as defined in  \cite{bbi:cmg}. 
We say that a real tree is {\sl rooted } if there is a distinguished
vertex $\partial=\partial_\bt$ which we call the root. Remark that the set $\{\partial\}$ is a rooted tree that only contains the root.

Let $\bt$ be a compact rooted real tree  and let  $x,y\in\bt$. We denote by $\lb x,y\rb$
the   range    of   the   map   $f_{x,y}$    described   in   Definition
\ref{defi:realtree}. We also  set $\lb x,y\lb=\lb x,y\rb\setminus\{y\}$.
We define the out-degree of $x$, denoted by $k_\bt(x)$, as the number of
connected  components of  $\bt\setminus\{x\}$  that do  not contain  the
root. If  $k_\bt(x)=0$, resp. $k_\bt(x)>1$,  then $x$ is called  a leaf,
resp. a branching point. We  denote by $\cl(\bt)$, resp. $\cb(\bt)$, the
set of leaves, resp. of branching points, of $\bt$. A tree is said to be
binary if  the out-degree of  its vertices belongs to  $\{0,1,2\}$.  The
skeleton of the  tree $\bt$ is the  set of points of $\bt$  that are not
leaves:     $\mathrm{sk}(\bt)=\bt\setminus\cl(\bt)$.     Notice     that
$\clo (  \mathrm{sk}(\bt))=\bt$, where  $\clo(A)$ denote the  closure of
$A$.

We denote by $\bt_x$ the
sub-tree of $\bt$ above $x$ i.e.
$$\bt_x=\{y\in\bt,\ x\in \lb\partial,y\rb\}$$
rooted at $x$.
We say that $x$ is an ancestor of $y$, which we denote
by $x\preccurlyeq y$, if $y\in \bt_x$. We write 
$x\prec y$ if furthermore $x\neq y$. Notice that $\preccurlyeq$ is a
partial order on $\bt$. 
We denote by $x\wedge y$ the Most Recent Common Ancestor (MRCA) of $x$ and $y$ in
$\bt$ i.e. the unique vertex of $\bt$ such that
$\lb\partial,x\rb\cap\lb\partial,y\rb=\lb\partial, x\wedge y\rb$.

We denote by
$h_\bt(x)=d_\bt(\partial,x)$ the height of the vertex $x$ in the tree
$\bt$ and by $H(\bt)$ the height of the tree $\bt$:
$$H(\bt)=\max\{h_\bt(x),\ x\in\bt\}.$$
We define the set of  extremal leaves of $\bt$ by:
\[
\cl^*(\bt)=\{y\in \cl(\bt), \, \exists x\in \bt \text{ s.t. }
x\prec y  \text{ and } h_{\bt_x}(y)=H(\bt_x)\}.
\]
In particular, we can have $\cl^*(\bt)\ne \cl(\bt)$, see Example
\ref{ex:L=L}. 


For $\varepsilon >0$, we define the erased tree $r_\varepsilon(\bt)$
(sometimes called in the literature the $\varepsilon$-trimming of the tree $\bt$)
by
\[
  r_\varepsilon(\bt)=\{x\in\bt\backslash\{\partial\},\ H(\bt_x)\ge \varepsilon\}\cup
\{\partial\}.
\]
For $\varepsilon>0$, $r_\varepsilon(\bt)$ is indeed a tree
and $r_\varepsilon(\bt)=\{\partial\}$ for $\varepsilon> H(\bt)$. Notice that
\begin{equation}\label{eq:approximation}
\bigcup_{\varepsilon>0}r_\varepsilon(\bt)=\mathrm{sk}(\bt).
\end{equation}

\begin{lem}
\label{lem:erased_finite}
  For   every compact rooted real tree $\bt$ not reduced to the root,  and   every
  $\varepsilon\in (0, H(\bt))>0$, the erased tree $r_\varepsilon(\bt)$ has
  finitely  many leaves.
\end{lem}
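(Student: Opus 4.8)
The plan is to argue by contradiction: suppose $r_\varepsilon(\bt)$ has infinitely many leaves. Each leaf $\ell$ of $r_\varepsilon(\bt)$ is, by definition of the trimming, a point $\ell\in\bt$ with $H(\bt_\ell)\geq\varepsilon$ but such that $\ell$ lies on the boundary of the trimmed set; concretely, for such a leaf there should exist a point $y\in\bt_\ell$ with $d_\bt(\ell,y)\geq\varepsilon$, i.e. a witness realizing (almost) the full height $\varepsilon$ above $\ell$. So first I would make precise the following claim: if $\ell$ is a leaf of $r_\varepsilon(\bt)$, then there is $y_\ell\in\bt$ with $\ell\preccurlyeq y_\ell$ and $d_\bt(\ell,y_\ell)\geq\varepsilon$ (take $y_\ell$ to realize, or nearly realize, $H(\bt_\ell)$, using compactness of $\bt_\ell$ to get an actual maximizer). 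This associates to each leaf a point "high above it."

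Next I would show that distinct leaves $\ell\neq\ell'$ of $r_\varepsilon(\bt)$ give points $y_\ell,y_{\ell'}$ that are far apart, so that $\{y_\ell\}$ is an infinite $\varepsilon$-separated set, contradicting compactness (total boundedness) of $\bt$. The key geometric fact is that two distinct leaves of the trimmed tree are not comparable in $\preccurlyeq$: if $\ell\prec\ell'$ then $\ell$ would be an interior point of $r_\varepsilon(\bt)$ along $\lb\ell,\ell'\rb$, not a leaf. Hence $\ell\wedge\ell'\prec\ell$ and $\ell\wedge\ell'\prec\ell'$ strictly, and $\ell\wedge\ell'\in r_\varepsilon(\bt)$ (its subtree contains $\ell$, hence has height $\geq\varepsilon$; actually one must check $\ell\wedge\ell'$ stays in $r_\varepsilon(\bt)$, which is clear since $\bt_{\ell\wedge\ell'}\supseteq\bt_\ell$). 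Then, since $y_\ell\in\bt_\ell$ and $y_{\ell'}\in\bt_{\ell'}$ with $\bt_\ell\cap\bt_{\ell'}=\emptyset$ (as $\ell,\ell'$ incomparable), the geodesic from $y_\ell$ to $y_{\ell'}$ passes through $\ell\wedge\ell'$, so
\[
d_\bt(y_\ell,y_{\ell'})=d_\bt(y_\ell,\ell\wedge\ell')+d_\bt(\ell\wedge\ell',y_{\ell'})\geq d_\bt(y_\ell,\ell)+d_\bt(\ell,y_{\ell'})\geq\varepsilon.
\]
Wait — I need $d_\bt(y_\ell,\ell\wedge\ell')\geq d_\bt(y_\ell,\ell)$, which holds because $\ell\wedge\ell'\preccurlyeq\ell\preccurlyeq y_\ell$ so $\ell$ lies on the geodesic from $\ell\wedge\ell'$ to $y_\ell$, giving $d_\bt(y_\ell,\ell\wedge\ell')=d_\bt(y_\ell,\ell)+d_\bt(\ell,\ell\wedge\ell')\geq d_\bt(y_\ell,\ell)\geq\varepsilon$. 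Thus already $d_\bt(y_\ell,y_{\ell'})\geq\varepsilon$, and the $y_\ell$ form an infinite $\varepsilon$-separated family.

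Finally I would invoke compactness of $(\bt,d_\bt)$: a compact metric space is totally bounded, so it cannot contain an infinite $\varepsilon$-separated set, a contradiction. Therefore $r_\varepsilon(\bt)$ has only finitely many leaves. The main obstacle is the bookkeeping in the incomparability/MRCA step — one must carefully use the real-tree axioms (uniqueness of geodesics, the characterization of $x\wedge y$) to justify that the geodesic between two points in disjoint subtrees $\bt_\ell$ and $\bt_{\ell'}$ indeed passes through $\ell\wedge\ell'$ and that distances add along it; everything else (existence of the high witnesses via compactness of $\bt_\ell$, the final total-boundedness argument) is standard. One should also double-check the edge case where some leaf of $r_\varepsilon(\bt)$ coincides with $\partial$: this can only happen if $r_\varepsilon(\bt)=\{\partial\}$, excluded by the hypothesis $\varepsilon<H(\bt)$, so the root is always an interior vertex and the argument above applies to all leaves.
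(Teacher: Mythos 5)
Your proposal is correct and follows essentially the same route as the paper: to each leaf of $r_\varepsilon(\bt)$ you attach a point at height at least $\varepsilon$ in the (pairwise disjoint) subtrees above the leaves, observe that these points form an infinite $\varepsilon$-separated family, and contradict the total boundedness of the compact tree $\bt$. The extra bookkeeping you flag (incomparability of distinct leaves, the geodesic through $\ell\wedge\ell'$) is sound and is simply left implicit in the paper's shorter argument, which gets the slightly stronger separation $2\varepsilon$ by the same computation.
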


\begin{proof}
  Let $\bt$ be a compact roooted real  tree not reduced to the root, and
  let   $\varepsilon\in (0, H(\bt))$.  We   set   $N$  the   number   of  leaves   of
  $r_\varepsilon(\bt)$.  If   $N=+\infty$,  there  exists   a  (pairwise
  distinct)     sequence     $(y_n,     n\in\N)$    of     leaves     of
  $r_\varepsilon(\bt)$. Then, by definition  the subtrees $\bt_{y_n}$ of
  $\bt$ are pairwise  disjoint and have height  $\varepsilon$. Hence, if
  we choose  for every $n\in\N$ a  point $x_n$ in $\bt_{y_n}$  such that
  $h_{\bt_n}(x_n)=\varepsilon$, the sequence $(x_n, n\in\N)$ satisfies
$$\forall i,j\in\N,\ i\ne j\Longrightarrow d_\bt(x_i,x_j)\ge
2\varepsilon$$
which contradicts the compactness of the tree $\bt$. So $N$ is finite.
\end{proof}

We give a definition of height regularity which implies the uniqueness
of $x^*_\bt$ for all $x\in \bt$. 

\begin{defi}[Height regular]
\label{def:h-reg}
We say that a compact rooted real tree $\bt$ is height-regular if, for every
$\varepsilon>0$, for every $(x,y)\in \cl(r_\varepsilon(\bt))^2\cup
\cb(r_\varepsilon(\bt))^2$,
$$x\ne y\Longrightarrow h_\bt(x)\ne h_\bt(y).$$  
\end{defi}

\begin{lem}
  Let   $\bt$   be   a    compact   height-regular   tree.   For   every
  $x\in  \bt$, there  exists a  unique $x_\bt^*\in\bt_x$  (or
  simply $x^*$ when there is no risk of confusion) such that
$h_{\bt_x}(x_\bt^*)=H(\bt_x)$.
\end{lem}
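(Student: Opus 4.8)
The plan is to obtain existence from a compactness argument and uniqueness from the trimming maps $r_\varepsilon$ combined with the height-regularity hypothesis.

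\emph{Existence.} If $x$ is a leaf, then $\bt_x=\{x\}$, $H(\bt_x)=0$ and we may take $x_\bt^*=x$. Otherwise I would observe that
$\bt_x=\{y\in\bt:\ d_\bt(\partial,y)=d_\bt(\partial,x)+d_\bt(x,y)\}$
is the zero set of a continuous function on $\bt$, hence a closed, therefore compact, subset of $\bt$; the continuous map $y\mapsto h_{\bt_x}(y)=d_\bt(x,y)$ then attains its maximum $H(\bt_x)$ at some $x_\bt^*\in\bt_x$. It is worth recording that any such maximiser is a leaf of $\bt$: if $k_\bt(y)\ge 1$ for $y\in\bt_x$, there is $y'\succ y$, and then $y'\in\bt_x$ with $h_{\bt_x}(y')>h_{\bt_x}(y)$; this is also why $x_\bt^*$ will later be recognised as an element of $\cl^*(\bt)$.

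\emph{Uniqueness.} Suppose $y_1\ne y_2$ in $\bt_x$ both attain $H(\bt_x)$, so in particular $H(\bt_x)>0$. Put $z=y_1\wedge y_2$. Since $h_\bt(y_1)=h_\bt(y_2)$ and $y_1\ne y_2$, neither point is an ancestor of the other, hence $z\prec y_1$ and $z\prec y_2$, and $\delta:=h_\bt(y_1)-h_\bt(z)>0$. The computation I would carry out is that for every $w\in\lb z,y_1\rb$ one has $H(\bt_w)=h_\bt(y_1)-h_\bt(w)$: the bound $\ge$ holds because $y_1\in\bt_w$, and the bound $\le$ because $\bt_w\subseteq\bt_z\subseteq\bt_x$ together with $\sup_{\bt_x}h_\bt=h_\bt(x)+H(\bt_x)=h_\bt(y_1)$; the analogous identity holds along $\lb z,y_2\rb$. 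Now fix any $\varepsilon\in(0,\delta)$ and let $y_i^\varepsilon$ be the point of $\lb z,y_i\rb$ at height $h_\bt(y_i)-\varepsilon$. By the identity just displayed, $H(\bt_{y_i^\varepsilon})=\varepsilon$, so $y_i^\varepsilon\in r_\varepsilon(\bt)$; and for any $v\succ y_i^\varepsilon$ we get $H(\bt_v)\le h_\bt(y_i^\varepsilon)+\varepsilon-h_\bt(v)<\varepsilon$, so $v\notin r_\varepsilon(\bt)$, which shows $y_i^\varepsilon\in\cl(r_\varepsilon(\bt))$. Moreover $h_\bt(y_i^\varepsilon)=h_\bt(y_1)-\varepsilon>h_\bt(z)\ge 0$, so $y_i^\varepsilon\ne\partial$, and since $\lb z,y_1\rb\cap\lb z,y_2\rb=\{z\}$ by definition of the MRCA while $y_i^\varepsilon\ne z$, we conclude $y_1^\varepsilon\ne y_2^\varepsilon$. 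Thus $(y_1^\varepsilon,y_2^\varepsilon)\in\cl(r_\varepsilon(\bt))^2$ is a pair of distinct vertices with $h_\bt(y_1^\varepsilon)=h_\bt(y_2^\varepsilon)$, contradicting the height-regularity of $\bt$ (Definition \ref{def:h-reg}). Hence $x_\bt^*$ is unique.

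The main obstacle is exactly this interface with Definition \ref{def:h-reg}: the natural maximisers $y_1,y_2$ are leaves of $\bt$, and leaves of $\bt$ never belong to any $r_\varepsilon(\bt)$, so height-regularity cannot be applied to them directly. The device that makes the argument work is to slide a small distance $\varepsilon$ down the two branches to the points $y_1^\varepsilon\ne y_2^\varepsilon$, and then to check carefully the two facts that these are honest leaves of $r_\varepsilon(\bt)$ and that they sit at the same $\bt$-height; once $\varepsilon$ is taken smaller than $\delta$ both are straightforward. The rest is routine real-tree bookkeeping (closedness and compactness of $\bt_x$, the additivity $h_\bt(y)=h_\bt(x)+h_{\bt_x}(y)$ on $\bt_x$, and the behaviour of $H(\bt_\cdot)$ along a branch).
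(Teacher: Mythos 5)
Your proposal is correct and follows essentially the same route as the paper: existence by compactness of $\bt_x$, and uniqueness by taking two maximisers $y_1\ne y_2$, sliding down a distance $\varepsilon<h_\bt(y_1)-h_\bt(y_1\wedge y_2)$ along each branch to produce two distinct leaves of $r_\varepsilon(\bt)$ at the same height, contradicting height-regularity. Your write-up merely makes explicit a few verifications (closedness of $\bt_x$, the identity $H(\bt_w)=h_\bt(y_1)-h_\bt(w)$ along the branch, and the fact that the slid-down points are genuine leaves of $r_\varepsilon(\bt)$) that the paper leaves to the reader.
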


\begin{proof}
If $x\in\cl(\bt)$, then $\bt_x=\{x\}$ and the lemma holds trivially.

Let $x\in\mathrm{sk}(\bt)$. First, as $\bt_x$ is compact, $H(\bt_x)$ is
finite and there exists at least one point $y\in\bt_x$ such that
$h_{\bt_x}(y)=H(\bt_x)$.

Assume there exists two
distinct points $y,y'\in\bt_x$ such that
$h_{\bt_x}(y)=h_{\bt_x}(y')=H(\bt_x)$.
Then we have $y\wedge y'\in\bt_x$ and $h_{\bt_x}(y \wedge y')< H(\bt_x)$.
We choose $\varepsilon>0$ such that
$\varepsilon < H(\bt_x)-h_{\bt_x}(y\wedge y')$ and we denote by
$y_\varepsilon$ (resp. $y_\varepsilon'$) the unique point in $\lb
y\wedge y',y\rb$ (resp. $\lb
y\wedge y',y'\rb$) such that $d_\bt(y_\varepsilon,y)=\varepsilon$
(resp. $d_\bt(y'_\varepsilon,y')=\varepsilon$). Remark that these points
exist by the particular choice of $\varepsilon$. Then, by definition,
$y_\varepsilon$ and $y_\varepsilon'$ are distinct leaves of
$r_\varepsilon(\bt)$ and have the same height, which contradicts the
fact that $\bt$ is height regular.
\end{proof}

Let $\bt$ be a compact binary height-regular rooted real tree. For $x\in \bt$, the vertex $x^*$ will be called the top
of the tree  $\bt_x$.  For such a tree, we have the equality:
\[
\cl^*(\bt)=\{x^*,\
 x\in \mathrm{sk}(\bt)\}.
\]

By Equation \reff{eq:approximation}, we also have
\[
\cl^*(\bt)=\bigcup_{\varepsilon>0}\{x^*,\ x\in \cl\bigl(r_\varepsilon(\bt)\bigr)\} 
\]
and we deduce from Lemma \ref{lem:erased_finite} that if $\bt$ is height-regular, then $\cl^*(\bt)$ is at most countable.

For every $x\in\bt$, we define the branching point of $x$ on
$\lb\partial,\partial^*\rb$ as
$$\underline{x}=x\wedge \partial^*.$$
For every $y\in\lb\partial,\partial^*\rb$, the sub-tree
(possibly reduced to its root) rooted at $y$ which does not contain
neither $\partial$ nor $\partial^*$ is given by 
$$\tilde\bt_y=\{z\in\bt,\ z\wedge \partial^*=y\}.$$
Notice that $\tilde \bt_y$ is indeed a tree.  Then, for every $x\in\bt$,
we define the maximal height of the subtree $\tilde \bt_{\underline{x}}$
which is  attached on  $\lb\partial,\partial^*\rb$ and  which contains
$x$ by
$$h'_\bt(x)=H(\tilde \bt_{\underline{x}})+h_\bt(\underline{x}).$$
See Figure \ref{fig:bt1} for a simplified picture of $x$, $\underline{x}$, $\bt_x$,
$x^*$, $\tilde \bt_{\underline{x}}$ and $h'_\bt(x)$. 

\begin{figure}[H]
\begin{center}
\psfrag{0}{$\partial$}
\psfrag{0*}{$\partial^*$}
\psfrag{x}{$x$}
\psfrag{x*}{$x^*$}
\psfrag{xb}{$\underline x$}
\psfrag{tx}{$\bt_x$}
\psfrag{ttx}{$\tilde\bt_{\underline x}$}
\psfrag{Ht}{$H(\tilde\bt_{\underline x})$}
\psfrag{ht}{$h'_\bt(x)$}
\includegraphics[width=8cm]{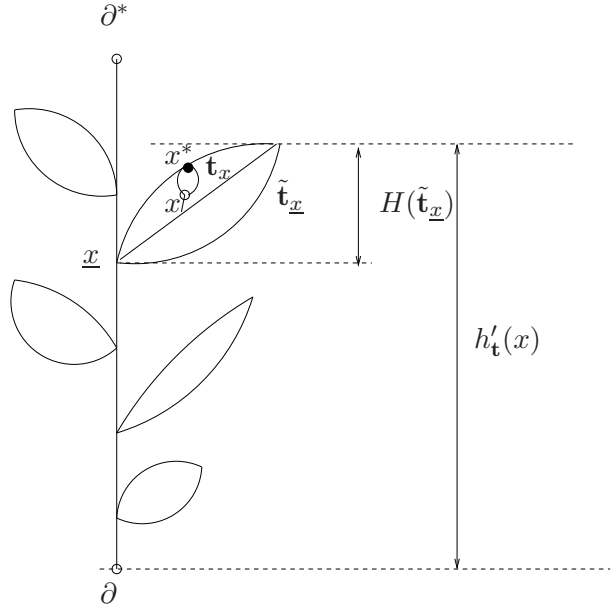}
\caption{A tree $\bt$, with  $x$, $x^*_\bt$ and  $\underline
  x$ elements of $\bt$,   and the sub-trees $\bt_x$ and $\tilde
  \bt_{\underline x}$.}
\label{fig:bt1}
\end{center}
\end{figure}

Let $\bt$ be a compact rooted real tree and let $(\bt_i, {i\in I})$ be a
family of  trees, and $(x_i, {i\in  I})$ a family of  vertices of $\bt$.
We   denote  by   $\bt_i^\circ=\bt_i\setminus\{\partial_{\bt_i}\}$.   We
define  the  tree  $\bt\circledast_{i\in  I}  (\bt_i,x_i)$  obtained  by
grafting the trees $\bt_i$ on the tree $\bt$ at points $x_i$ by
\begin{align*}
 & \bt\circledast_{i\in I}(\bt_i,x_i)=\bt\sqcup\left(\bigsqcup_{i\in I} \bt_i^\circ\right),\\
&
d_{\bt\circledast_{i\in I}
  (\bt_i,x_i)}(y,y') =\begin{cases}
d_\bt(y,y') & \mbox{if }y,y'\in \bt,\\
d_{\bt_i}(y,y') & \mbox{if }y,y'\in \bt_i^\circ,\\
d_\bt(y,x_i)+d_{\bt_i}(\partial_{\bt_i},y') & \mbox{if } y\in\bt\mbox{
  and }y'\in\bt_i^\circ,\\
d_{\bt_i}(y,\partial_{\bt_i})+d_\bt(x_i,x_j)+d_{\bt_j}(\partial_{\bt_j},y')
&  \mbox{if } y\in\bt_i^\circ\mbox{
  and }y'\in\bt_j^\circ\mbox{ with }i\ne j,
\end{cases}\\
 & \partial_{\bt\circledast_{i\in I}(\bt_i,x_i)}=\partial_\bt,
\end{align*}
where $A\sqcup B$ denotes the disjoint union of the sets $A$ and
$B$. Notice that $\bt\circledast_{i\in I}(\bt_i,x_i)$ might not be
compact. 

Let us finish with an instance of a tree $\bt$ such that $\cl^*(\bt)\ne
\cl(\bt)$.

\begin{ex}
      \label{ex:L=L}
For every positive integer $n$, let us set $\bt_n=[0,1/n]\subset \R$, viewed as a rooted real tree when endowed with the usual distance on the real line and rooted at 0. We consider the tree
\[
\bt=\bt_1\circledast _{n\ge 2}(\bt_n,1-\frac{1}{n^2}).
\]
Then $\bt$ is a compact height-regular tree and $1\in \bt_1$ is a leaf of
$\bt$ that does not belong to $\cl^*(\bt)$.
\end{ex}

\subsection{The Gromov-Hausdorff topology}

In order to define random real trees, we endow the set of (isometry classes of) rooted compact real
trees with a metric, the so-called Gromov-Hausdorff metric, which hence
defines a Borel $\sigma$-algebra on this set.

First, let us recall the definition of the Hausdorff distance between
two compact subsets: let $A,B$ be two compact subsets of a metric
space $(X,d_X)$. For every $\varepsilon>0$, we set:
$$A^\varepsilon=\{x\in X,\ d_X(x,A)\le \varepsilon\}.$$
Then, the Hausdorff distance between $A$ and $B$ is defined by:
\[
d_{X, \text{Haus}}(A,B)=\inf\{\varepsilon>0,\  B\subset A^\varepsilon\
\mbox{and}\ A\subset B^\varepsilon\}.
\]

Now, let $(\bt,d_\bt,\partial_\bt)$, $(\bt',d_{\bt'},\partial_{\bt'})$ be two compact
rooted real trees. We define
the pointed Gromov-Hausdorff distance between them, see
\cite{g:msrnrs, epw:rprtrgr},  by:
\[
d_{GH}(\bt,\bt')=\inf\{d_{Z, \text{Haus}}(\varphi(\bt),\varphi'(\bt))\vee
d_Z(\varphi(\partial_\bt),\varphi'(\partial_{\bt'}))\},
\]
where the infimum is taken over all metric spaces $(Z,d_Z)$ and all
isometric embeddings $\varphi:\bt\longrightarrow Z$ and
$\varphi':\bt'\longrightarrow Z$.

Notice that  $d_{GH}$ is only a  pseudo-metric.  We say that  two rooted
real trees $\bt$  and $\bt'$ are equivalent (and  we note $\bt\sim\bt'$)
if there exists a root-preserving  isometry that maps $\bt$ onto $\bt'$,
that  is $d_{GH}(\bt,  \bt')=0$.   This clearly  defines an  equivalence
relation. We  denote by $\T$ the  set of equivalence classes  of compact
rooted real trees.  The Gromov-Hausdorff distance $d_{GH}$ hence induces
a metric  on $\T$ (that  is still  denoted by $d_{GH}$).   Moreover, the
metric   space   $(\T,d_{GH})$   is    complete   and   separable,   see
\cite{epw:rprtrgr}. If $\bt,\bt'$ are two-compact rooted real trees such
that  $\bt\sim   \bt'$,  then,   for  every  $\varepsilon>0$,   we  have
$r_\varepsilon(\bt)\sim r_\varepsilon(\bt')$. Thus, the erasure function
$r_\varepsilon$ is well-defined  on $\T$.  It is easy to  check that the
functions $r_\varepsilon$ for $\varepsilon>0$ are 1-Lipschitz.

Notice that if  $\bt$ is a compact height-regular real  tree, so are all
the trees  equivalent to $\bt$. Let  $\T_0\subset \T$ denote the  set of
equivalence classes of compact binary height-regular real trees.
The next lemma ensures that $\T_0$ is a Borel subset of $\T$. 
\begin{lem}
   \label{lem:T_0}
We have that $\T_0$ is a dense Borel subset of
$\T$. 
\end{lem}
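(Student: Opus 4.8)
The plan is to prove the two assertions of Lemma \ref{lem:T_0} separately: first density, then Borel measurability.

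\textbf{Density.}  To show $\T_0$ is dense in $\T$, I would start from an arbitrary compact rooted real tree $\bt$ and approximate it by a binary height-regular tree.  The natural strategy is a two-step perturbation.  First, approximate $\bt$ by a \emph{finite} tree: since $\clo(\mathrm{sk}(\bt))=\bt$ and, by \eqref{eq:approximation}, $r_\varepsilon(\bt)\uparrow \mathrm{sk}(\bt)$ as $\varepsilon\downarrow 0$, and since $r_\varepsilon$ is $1$-Lipschitz, the erased trees $r_\varepsilon(\bt)$ converge to $\bt$ in $d_{GH}$ as $\varepsilon\to 0$; by Lemma \ref{lem:erased_finite} each $r_\varepsilon(\bt)$ has finitely many leaves, hence is a finite tree (a finite union of segments).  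So it suffices to approximate an arbitrary \emph{finite} tree by a binary height-regular one.  For a finite tree, one resolves each branching point of out-degree $k>2$ into $k-1$ binary branching points by inserting short edges of length $\eta$; letting $\eta\to 0$ this is a $d_{GH}$-small perturbation and produces a binary tree.  Finally, to make the tree height-regular one perturbs the lengths of the finitely many edges by generic amounts: the height-regularity condition only involves, for each $\varepsilon>0$, the finitely many leaves and branching points of $r_\varepsilon(\bt)$, and for a finite tree only finitely many values of $\varepsilon$ are relevant (the heights of the branching points), so avoiding the countably many ``bad'' coincidences of heights is achieved by an arbitrarily small generic choice of edge lengths.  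Combining these perturbations shows every $\bt\in\T$ is a $d_{GH}$-limit of elements of $\T_0$.

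\textbf{Borel measurability.}  To show $\T_0$ is Borel, I would express the defining conditions — compact (automatic in $\T$), binary, height-regular — as countable combinations of Borel conditions on $\bt$ via the maps $r_\varepsilon$.  The key point is that it suffices to test the conditions along a countable sequence $\varepsilon_n\downarrow 0$.  For the binary condition: $\bt$ is binary iff $r_{\varepsilon}(\bt)$ is binary for all $\varepsilon>0$, and since $r_\varepsilon(\bt)$ increases to $\mathrm{sk}(\bt)$, it is enough that $r_{\varepsilon_n}(\bt)$ is binary for all $n$; and the set of finite trees with a prescribed combinatorial shape and out-degrees bounded by $2$ is closed (or at least Borel) in $\T$.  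For height-regularity: the condition ``for every $(x,y)\in\cl(r_\varepsilon(\bt))^2\cup\cb(r_\varepsilon(\bt))^2$ with $x\ne y$ one has $h_\bt(x)\ne h_\bt(y)$'' is, for fixed $\varepsilon$, a condition on the finite tree $r_\varepsilon(\bt)$ and its embedding, which one checks is Borel in $\bt$ (the heights and the finite vertex sets depend measurably on $r_\varepsilon(\bt)$, which depends continuously on $\bt$); intersecting over $\varepsilon=\varepsilon_n$, $n\in\N$, gives a Borel set.  Then $\T_0$ is a countable intersection of Borel sets, hence Borel.

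\textbf{Main obstacle.}  The routine-looking but genuinely delicate point is measurability of the maps $\bt\mapsto \cl(r_\varepsilon(\bt))$, $\bt\mapsto \cb(r_\varepsilon(\bt))$ and the associated height multiset — i.e.\ making precise that ``the combinatorial structure together with edge lengths of the finite tree $r_\varepsilon(\bt)$ depends measurably on $\bt$.''  One clean way is to observe that $r_\varepsilon(\bt)$ takes values in the space of finite trees, which stratifies into countably many pieces indexed by combinatorial type, each piece being parametrized (by edge lengths) by a subset of a Euclidean space via a map that is bi-measurable onto its image; on each stratum the leaves, branching points and their heights are continuous functions of the edge-length parameters, and the preimage of each stratum under $\bt\mapsto r_\varepsilon(\bt)$ is Borel because $r_\varepsilon$ is continuous.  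I would carry out the density argument first and in detail, then invoke this stratification once to dispatch all the measurability claims at once.
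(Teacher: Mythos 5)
Your proposal is correct and follows essentially the same route as the paper's proof: both reduce to finite trees via the $1$-Lipschitz erasure maps $r_\varepsilon$ (so that $d_{GH}(\bt,r_\varepsilon(\bt))\le\varepsilon$ gives density once finite binary height-regular trees are shown dense among finite trees by a generic perturbation), and both obtain measurability by writing $\T_0$ as a countable intersection of preimages $r_\varepsilon^{-1}(\T_0^f)$ of a Borel set of finite trees. Your stratification by combinatorial type plays the same role as the paper's observation that the trees with at most $n$ leaves form a closed set in which the binary height-regular ones with exactly $n$ leaves are open; the two arguments are interchangeable.
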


\begin{proof}

Let $\T^f$ (resp. $\T_0^f$) be the subset of trees of $\T$ (resp. $\T_0$) with finitely many leaves. Let $\varepsilon>0$. By Lemma \ref{lem:erased_finite}, we have $r_\varepsilon(\T)\subset \T^f$. Conversely, for every $\bt\in\T^f$, we define
\[
\tilde\bt =\bt\circledast _{x\in\cl(\bt)}([0,\varepsilon],x)
\]
where the segment $[0,\varepsilon]$ is viewed as a rooted real tree when endowed with the usual distance on the real line and with root 0. Then we have $r_\varepsilon(\tilde\bt)=\bt$ and hence $r_\varepsilon(\T)=\T^f$. The same arguments also apply to obtain $\T_0^f=r_\varepsilon(\T_0)$.

Notice that for every $n\ge 1$, the subset $\T^n$ of trees with less than $n$ leaves is a closed subset of $\T$ and that the subset of binary height-regular trees with exactly $n$ leaves is an open set (for the induced topology) of $\T^n$. This implies that $\T^f$ and $\T^f_0$ are Borel sets. 
Then, use  that
$
  \T_0=\bigcap_{\varepsilon>0}r_\varepsilon^{-1}(\T_0^f)
 $
  to get that  $\T_0$ is a
  measurable subset of $\T$.

Using    Definition     \ref{def:h-reg},     it    is easy    to     prove    that
  $\T_0^f$    is dense in   $\T^f$. Since
  $d_{GH}(\bt, r_\varepsilon(\bt))\leq \varepsilon$  for $\bt\in \T$ and
  $\varepsilon>0$, we  deduce that  $\T^f$ is dense in $\T$. This
  implies that  $\T^f_0$, and thus  $\T_0$, is dense  in $\T$.   
\end{proof}

\subsection{Coding a compact real tree by a function and the Brownian
  CRT}\label{sec:CRT}

Let     $\ce$     be     the      set     of     continuous     function
$g:[0,+\infty)\longrightarrow [0,+\infty)$ with compact support and such
that     $g(0)     =     0$.      For     $g\in     \ce$,     we     set
$\sigma(g)=\sup  \{x, \,  g(x)>0\}$.  Let $g\in  \ce$,  and assume  that
$\sigma(g)>0$,  that  is   $g$  is  not  identically   zero.  For  every
$s, t \ge 0$, we set
$$m_g(s, t) = \inf _{r\in [s\wedge t,s\vee  t ]}g(r),$$
and
\begin{equation}\label{eq:dist_g}
d_g(s, t) = g(s) + g(t) - 2m_g(s, t ).
\end{equation}
It is easy to check that  $d_g$ is a pseudo-metric on $[0,+\infty)$.  We
then say that $s$ and $t$ are equivalent  iff $d_g(s, t) = 0$ and we set
$T_g$ the associated quotient space. We  keep the notation $d_g$ for the
induced distance  on $T_g$.   Then the  metric space  $(T_g, d_g)$  is a
compact  real-tree, see  \cite{dlg:pfalt}.   We denote  by  $p_g$  the
canonical  projection  from $[0,  +\infty  )$  to  $T_g$. We  will  view
$(T_g, d_g)$  as a rooted real  tree with root $\partial  = p_g(0)$. We
will call $(T_g,d_g)$ the  real tree coded by $g$, and conversely that $g$ is a contour function of the tree $T_g$. We  denote by $F$ the
application that associates with a function $g\in\ce$ the equivalence class of the tree $T_g$.

Conversely every rooted compact real tree $(T,d)$ can be coded by a
continuous function $g$ (up to a root-preserving isometry), see
\cite{d:ccrtrvf}.

\bigskip
Let           $\theta\in           \R$,          $\beta>0$           and
$B^{(\theta)}=(B^{(\theta)}_t,  t\geq 0)$  be a Brownian motion  with
drift $-2\theta$ and scale $\sqrt{2/\beta}$: for $t\geq 0$,
\[
B^{(\theta)}_t =\sqrt{2/\beta}\, B_t-2\theta t,
\]
where $B$ is a standard Brownian motion. For $\theta\geq 0$, let
$n^{(\theta)}[de]$ denote the It\^o measure on $\ce$ of positive excursions of
$B^{(\theta)}$  normalized such that for $\lambda\geq 0$:
\begin{equation}
   \label{eq:normlalisation}
n^{(\theta)}\left[1- \expp{-\lambda \sigma}\right]=
\psi_\theta^{-1}(\lambda), 
\end{equation}
where $\sigma=\sigma(e)$ denotes the duration (or the length) of the
excursion $e$ and for $\lambda\geq 0$:
\begin{equation}\label{eq:psi}
\psi_\theta(\lambda)=\beta \lambda^2+2\beta\theta\lambda.
\end{equation}
Let $\zeta=\zeta(e)=\max_{s\in [0, \sigma]}(e_s)$ be the maximum of the
excursion. We set $c_\theta(h)=n^{(\theta)}[\zeta\ge h]$ for $h>0$, and we
recall, see  Section 7 in \cite{cd:spsmrcatsbp} for the case $\theta>0$,  that:
\begin{equation}\label{eq:def-c}
c_\theta(h)=
\begin{cases}
   (\beta h)^{-1} &\text{if $\theta=0$}\\
2\theta \, (\expp{2\beta\theta h}-1) ^{-1} &\text{if $\theta>0$.}
\end{cases}
\end{equation}

We  define  the   Brownian  CRT,
$\tau=F (e)$,  as the (equivalence class of the) tree coded by the  positive excursion $e$
under $n^{(\theta)}$. And we define  the measure $\N^{(\theta)}$ on $\T$
as  the ``distribution''  of $\tau$,  that  is the  push-forward of  the
measure $n^{(\theta)}$ by the application $F$. Notice that
$H(\tau)=\zeta(e)$. 

\begin{rem}\label{rem:height_process}
If we translate the former construction into the framework of
\cite{dlg:rtlpsbp}, then, for $\theta\geq 0$,  $B^{(\theta)}$ is the
  height process which codes the Brownian CRT with branching mechanism
  $\psi_\theta$ and it is obtained from the
  underlying L\'evy process $X=(X_t, t\geq 0)$ with $X_t=\sqrt{2\beta}\,
  B_t-2\beta\theta t$. 
\end{rem}

Let     $e$     with     ``distribution''     $n^{(\theta)}(de)$     and
let $(\Lambda_s^a, s\ge 0,  a\ge 0)$ be the local  time of $e$ at  time $s$ and
level $a$.  Then we  define the  local time measure  of $\tau$  at level
$a\ge 0$,  denoted by $\ell_a(dx)$,  as the push-forward of  the measure
$d\Lambda_s^a$   by  the   map  $  F$,  see   Theorem  4.2   in
\cite{dlg:pfalt}. We shall define $\ell_a$ for $a\in \R$ by setting
$\ell_a=0$ for $a\in \R\setminus [0, H(\tau)]$. 

\subsection{Forests}
\label{sec:forest}

A  forest  $\bff$  is  a  family $((h_i,\bt_i), \, i\in  I)$  of  points  of
$\R\times \T$. Using  an immediate extension of  the grafting procedure,
for  an  interval $\mathfrak{I}\subset  \R$,  we  define the  real  tree
$\bff_\mathfrak{I}=\mathfrak{I}\circledast_{i\in I, h_i\in \mathfrak{I}}
(\bt_i,                                                           h_i)$.
For  $\mathfrak{I}=\R$, $\bff_\R$  is an  infinite spine  (the
real line)  on which we  graft the compact  trees $\bt_i$ at  the points
$h_i$ respectively. We shall identify the forest $\bff$ with $\bff_\R$
when the $(h_i, i\in I)$ are pairwise distinct.

Let us denote, for $i\in I$,  by $d_i$ the distance of the tree $\bt_i$
and by $\bt_i^\circ=\bt_i\setminus\{\partial_{\bt_i}\}$ the tree $\bt_i$
without its root. The
distance on $\bff_\mathfrak{I}$ is then defined, for $x,y\in
\bff_\mathfrak{I}$, by: 
\[
d_\bff(x,y)=\begin{cases}
d_i(x,y) & \text{ if }x,y\in\bt_i^\circ,\\
h_{\bt_i}(x)+|h_i-h_j|+h_{\bt_j}(y) & \text{ if } x\in\bt_i^\circ,\ y\in\bt_j^\circ\mbox{ with }i\ne j,\\
|x-h_j|+h_{\bt_j}(y) & \text{ if } x\not\in\bigcup_{i\in I}\bt_i^\circ,\
y\in\bt_j^\circ\\ 
|x-y| & \text{ if } x,y \not\in\bigcup_{i\in I}\bt_i^\circ.
\end{cases}
\]

The next lemma essentially states that $\bff_\R$ is locally compact. See
\cite{adh:nghpdblcmms}    and   the    references   therein    for   the
Gromov-Hausdorff topology on the set of locally compact trees.

\begin{lem}\label{lem:loc-comp}
  Let  $\mathfrak{I}\subset  \R$ be  a  closed  interval. If  for  every
  $a,b\in\mathfrak{I}$, such that $a<b$,  and every $\varepsilon>0$, the
  set $\{i\in I,\ h_i\in[a,b],\  H(\bt_i)>\varepsilon\}$ is finite, then
  the tree $\bff_\mathfrak{I}$ is a complete locally compact real tree. 
\end{lem}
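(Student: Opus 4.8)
\textbf{Proof plan for Lemma \ref{lem:loc-comp}.}

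The plan is to show that $\bff_\mathfrak{I}$ is complete and that every bounded closed subset is compact; for a length space this gives local compactness. First I would establish completeness. Let $(x_n, n\in \N)$ be a Cauchy sequence in $\bff_\mathfrak{I}$. Using the explicit formula for $d_\bff$, I observe that the ``spine projection'' $\pi(x_n)$ (the point of $\mathfrak{I}$ on the spine below $x_n$, i.e.\ $h_i$ if $x_n\in \bt_i^\circ$ and $x_n$ itself otherwise) is also Cauchy in $\mathfrak{I}\subset \R$, hence converges to some $a\in \mathfrak{I}$ since $\mathfrak{I}$ is closed. If infinitely many $x_n$ lie outside $\bigcup_i \bt_i^\circ$ the limit is $a$ itself; otherwise, for $n$ large, the $x_n$ either all eventually lie in a single $\bt_i^\circ$ with $h_i$ near $a$ — in which case completeness of the compact tree $\bt_i$ furnishes a limit — or $h_i\to a$ along a subsequence with $h_i\neq a$, but then the finiteness hypothesis (applied on an interval $[a-\delta,a+\delta]\cap\mathfrak{I}$ with $\varepsilon$ chosen below $\inf_n h_{\bt_i}(x_n)$, which is bounded away from $0$ since the sequence is Cauchy and not converging to the spine) forces only finitely many distinct indices $i$ to occur, reducing again to the single-tree case. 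This case analysis is routine once the spine projection is introduced.

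For local compactness it suffices, since $\bff_\mathfrak{I}$ is a length space and complete, to show that closed balls are compact; equivalently, by the Hopf--Rinow type statement (\cite{bbi:cmg}), that every bounded sequence has a convergent subsequence. Fix $R>0$ and a point $o$ on the spine, and let $(x_n)$ satisfy $d_\bff(o,x_n)\le R$. Again project to the spine: the points $\pi(x_n)$ lie in a bounded subset of $\mathfrak{I}$, hence (after extracting) converge to some $a\in\mathfrak{I}$. Now split according to whether $x_n$ lies on the spine or in some $\bt_i^\circ$ with $H(\bt_i)$ large or small. Pick $a<b$ in $\mathfrak{I}$ (or the appropriate one-sided interval) containing a neighbourhood of $a$ and of all $\pi(x_n)$ for $n$ large; for each $\varepsilon>0$ the hypothesis gives only finitely many indices $i$ with $h_i\in[a,b]$ and $H(\bt_i)>\varepsilon$. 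Therefore, for each fixed $\varepsilon$, outside a finite union of compact trees $\bt_i$ (each of which is compact, so contributes a convergent subsequence by a diagonal argument over the finitely many relevant indices) and a bounded piece of the spine, every $x_n$ is within distance $\varepsilon$ of the spine. A diagonal extraction over $\varepsilon = 1/k$ then produces a subsequence that is Cauchy in $\bff_\mathfrak{I}$, which converges by the completeness just proved.

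The main obstacle is the accumulation of small trees near a spine point: the hypothesis only controls trees of height $>\varepsilon$ on a compact spine interval, so one must handle the ``dust'' of arbitrarily short trees by the $\varepsilon$-approximation argument above rather than by a direct finiteness statement. Everything else — the spine projection, the reduction to compactness of the individual $\bt_i$, and the passage from total boundedness plus completeness to compactness of balls in a length space — is standard. I would present the proof as: (i) introduce the spine projection and note its $1$-Lipschitz property from the distance formula; (ii) prove completeness via the case analysis above; (iii) fix a closed ball, extract so that spine projections converge, and use the finiteness hypothesis together with compactness of the finitely many ``tall'' trees and the $1/k$-diagonal argument to extract a Cauchy subsequence; (iv) conclude local compactness from completeness plus compactness of closed balls, invoking that $\bff_\mathfrak{I}$ is a length space.
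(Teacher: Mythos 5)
Your proposal is correct and follows essentially the same route as the paper: project onto the spine, use closedness of $\mathfrak{I}$ and compactness of the individual $\bt_i$, and invoke the finiteness hypothesis to force the heights of the remaining (pairwise distinct) trees to zero. The paper merely packages this more economically, showing in one step that every bounded sequence admits an accumulation point, which yields completeness and local compactness simultaneously without a separate Hopf--Rinow/diagonal extraction.
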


\begin{proof}
  Let $(x_n, \, {n\ge 0})$ be a bounded sequence of $\bff_\mathfrak{I}$.
  If there exists a sub-sequence $(x_{n_k},  \, k \geq 0)$ which belongs
  to $\mathfrak{I}$ (resp.  to $\bt_i^\circ$ for some $i\in I$), then as
  $   \mathfrak{I}$    is a closed interval   (resp.
  $\bt_i^\circ\cup \{h_i\}$ is compact),  this sub-sequence admits at
  least one accumulation point.

  If this  is not the case,  without loss of generality,  we can suppose
  that  $x_n\in\bt_{i_n}^\circ$ with  pairwise  distinct indices  $i_n$.
  Notice  that  the  sequence  $(h_{i_n},   n\geq  0)$  of  elements  of
  $\mathfrak{I}$              is             bounded,              since
  $d_\bff(h_{i_0}, h_{i_n})\leq d_\bff(x_{0}, x_{n})$.  Therefore, as
  $\mathfrak{I}$ is a 
  closed    interval,   there    exists   a    converging   sub-sequence
  $(h_{i_{n_k}}, \, {k\ge 0})$. Let us denote by $h\in \mathfrak{I}$ its
  limit.       Moreover,       using      the       assumption      that
  $\{i\in I,\  h_i\in[a,b],\ H(\bt_i)>\varepsilon\}$ is finite  for all
  $a<b$, we have  $\lim_{n\to+\infty} d_\bff(x_n,h_{i_n})=0$.
  Therefore, the 
  sub-sequence $(x_{n_k}, \, k\geq 0)$ converges to $h$.

In conclusion,
  we get that every bounded sequence of $\bff_\mathfrak{I}$ admits at
  least one accumulation point. This implies that  $\bff_\mathfrak{I}$
  is complete and locally compact.
\end{proof}

We extend the notion of height of a vertex and of the subtree above a vertex for a forest: for $x\in\bff_\R$, either there exists a unique $i\in I$ such that  $x\in\bt_i$ and we set $h_\bff(x)=h_i+h_{\bt_i(x)}$ and $\bt_x$ the subtree above $x$ in $\bt_i$, or $x\in\R$ and we set $h_\bff(x)=x$ and $\bt_x=\{x\}$.

\section{The reversed tree}\label{sec:reverse}

\subsection{Backbones}\label{sec:backbones}

For a compact rooted real tree  $\bt$, we define an increasing family of
backbones        $(B_n(\bt))_{n\in\N}$.        We        denote       by
$S_0(\bt)=\{x\in\bt,\ h_\bt(x)=H(\bt)\}$ the set  of leaves with maximal
height and  we define the  initial backbone as  the set of  ancestors of
$S_0(\bt)$:
$$B_0(\bt)=\bigcup_{x\in S_0(\bt)}\lb\partial,x\rb.$$
Notice that
if the tree $\bt$ is height-regular, then $S_0(\bt)=\{\partial ^*\}$
and $B_0(\bt)=\lb\partial,\partial^*\rb$ is just the spine from the
root of the tree to its top. 

Let $(\tilde \bt^i, i\in I_0)$ be the connected components of
$\bt\setminus B_0(\bt)$. If $\bt^i$ denotes the closure of
$\tilde\bt^i$, we have $\bt^i=\tilde\bt^i\cup\{x_i\}$ for a unique
$x_i\in B_0(\bt)$ which can be viewed as the root of $\bt^i$.
Then, we define the family of backbones recursively: for $n\ge 1$, we
set
$$B_n(\bt)=B_0(\bt)\circledast _{i\in
  I_0}\bigl(B_{n-1}(\bt^i),x_i\bigr).$$

\begin{rem}
We can also use the alternative recursive definition
$$B_n(\bt)=B_{n-1}(\bt)\circledast_{i\in I_{n-1}}(B_0(\hat\bt^i\cup\{y_i\}),y_i),$$
where the family $(\hat\bt^i,i\in I_{n-1})$ is the connected components of
$\bt\setminus B_{n-1}(\bt)$ and $y_i$ is the unique vertex of $\bt$ such
that $\hat\bt^i\cup\{y_i\}$ is closed (and $y_i$ is then considered as the
root of this tree).
\end{rem}

\begin{rem}
It is easy to check that, if $\bt\sim\bt'$ then, for every $n\in\N$,
$B_n(\bt)\sim B_n(\bt')$. So the function $B_n$ is well defined on
$\T$.
\end{rem}

It is easy to check that for $\bt$ a compact rooted real tree,
$\varepsilon>0$:
\begin{equation}
   \label{eq:rB=Br}
r_\varepsilon\circ B_n(\bt)= B_n \circ r_\varepsilon(\bt). 
\end{equation}
By   Lemma   \ref{lem:erased_finite},   we   deduce   that   for   every
$\bt\in  \T$ and  $\varepsilon>0$,
there exists  an integer $N$  (that depends on $\bt$  and $\varepsilon$)
such that
\begin{equation}
\label{eq:r_epsilon}
r_\varepsilon(\bt)=\bigcup_{n=0}^NB_n\bigl(r_\varepsilon(\bt)\bigr)=B_N\circ
r_\varepsilon(\bt).
\end{equation}

\begin{lem}\label{lem:B_n-->t}
Let $\bt$ be a compact rooted real tree not reduced to the root.
\begin{itemize}
\item We have $\clo \left(\bigcup _{n\in \N}B_n(\bt)\right)=\bt$.
\item Furthermore, if $\bt$ is height-regular and binary, then we have $\displaystyle \bigcup_{n\in\N} \cl(B_n(\bt))=\cl^*(\bt)$.
\end{itemize}
\end{lem}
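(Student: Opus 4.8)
The plan is to prove the two assertions of Lemma~\ref{lem:B_n-->t} separately, using the $\varepsilon$-trimming as the main tool.

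\textbf{First assertion.} I would show $\clo\left(\bigcup_{n\in\N}B_n(\bt)\right)=\bt$ by combining Equation~\reff{eq:approximation} with Equation~\reff{eq:r_epsilon}. Fix $x\in\mathrm{sk}(\bt)$; by \reff{eq:approximation} there is $\varepsilon>0$ with $x\in r_\varepsilon(\bt)$. By \reff{eq:r_epsilon} there is $N=N(\bt,\varepsilon)$ with $r_\varepsilon(\bt)=B_N(r_\varepsilon(\bt))$, and by the commutation relation \reff{eq:rB=Br} this equals $r_\varepsilon(B_N(\bt))\subset B_N(\bt)$. Hence $x\in B_N(\bt)\subset\bigcup_{n}B_n(\bt)$. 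This shows $\mathrm{sk}(\bt)\subset\bigcup_{n}B_n(\bt)\subset\bt$; taking closures and using $\clo(\mathrm{sk}(\bt))=\bt$ (noted after Definition~\ref{defi:realtree}) gives the claim.

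\textbf{Second assertion.} Assume $\bt$ height-regular and binary. I would argue both inclusions. For $\bigcup_n\cl(B_n(\bt))\subset\cl^*(\bt)$: each leaf $y$ of $B_n(\bt)$ arises, in the recursive construction, as the top $(\bt')^*$ of some subtree $\bt'$ attached along a lower-order backbone (for $n=0$, $y=\partial^*$ is the top of $\bt_\partial=\bt$). For such $y$ there is $x\prec y$ with $h_{\bt_x}(y)=H(\bt_x)$, so $y\in\cl^*(\bt)$ by definition. For the reverse inclusion $\cl^*(\bt)\subset\bigcup_n\cl(B_n(\bt))$, I would use the representation established just before the lemma, $\cl^*(\bt)=\bigcup_{\varepsilon>0}\{x^*:x\in\cl(r_\varepsilon(\bt))\}$. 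Given $z\in\cl^*(\bt)$, pick $\varepsilon>0$ and $x\in\cl(r_\varepsilon(\bt))$ with $z=x^*$. Since $r_\varepsilon(\bt)$ has finitely many leaves (Lemma~\ref{lem:erased_finite}), it equals $B_N(r_\varepsilon(\bt))$ for some $N$ by \reff{eq:r_epsilon}; then $x$ is a leaf of $B_N(r_\varepsilon(\bt))$, hence (chasing the recursion) the top of some subtree $\bt'$ attached along an earlier backbone of $r_\varepsilon(\bt)$, and its top $x^*=z$ is a leaf of $B_{N+1}(\bt)$ — roughly, one more round of the backbone construction reaches the top of the subtree rooted just below $x$. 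I would make this precise by the commutation $r_\varepsilon(B_{n}(\bt))=B_n(r_\varepsilon(\bt))$ and by tracking how leaves of $B_n$ become interior spine-points of $B_{n+1}$ whose new top at the next level is exactly $z$.

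\textbf{Main obstacle.} The delicate point is the reverse inclusion in the second assertion: one must correctly match an extremal leaf $z=x^*$ of $\bt$, where $x$ is a leaf of the trimmed tree $r_\varepsilon(\bt)$, with a leaf of some $B_n(\bt)$, keeping track of at which level the top of the subtree above $x$ first enters the backbone as a leaf. Height-regularity is essential here, since it guarantees $S_0$ of each relevant subtree is a single point (a genuine top), so the backbones are honest spines and the tops are unambiguous; binarity ensures the extremal-leaf description $\cl^*(\bt)=\{x^*:x\in\mathrm{sk}(\bt)\}$ used throughout. I expect the bookkeeping of indices in the recursive grafting definition of $B_n$ — rather than any conceptual difficulty — to be where care is needed.
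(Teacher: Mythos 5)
Your proposal is correct and takes essentially the same route as the paper: both parts rest on the trimming identity $r_\varepsilon(\bt)=B_N(r_\varepsilon(\bt))\subset B_N(\bt)$ from \reff{eq:r_epsilon} together with $\clo(\mathrm{sk}(\bt))=\bt$, and on the observation that the leaves of the backbones are exactly the tops $y^*$ of subtrees $\bt_y$. The only cosmetic difference is in the reverse inclusion of the second point, where the paper picks $y\in\mathrm{sk}(\bt)$ with $y^*=x$ and sets $\varepsilon=d(y,x)$ directly, getting $x=y^*\in\cl(B_n(\bt))$ at the same index $n$ for which $y\in B_n(\bt)$, whereas you route through $\cl^*(\bt)=\bigcup_{\varepsilon>0}\{x^*:x\in\cl(r_\varepsilon(\bt))\}$ and allow one extra round of the construction; both are fine since the union over $n$ absorbs the index shift.
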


\begin{proof}
Let $\bt$ be a compact rooted real tree not reduced to the root.
Let $x\in \mathrm{sk}(\bt)$ and set $\varepsilon=H(\bt_x)>0$. By
definition $x\in r_\varepsilon(\bt)$, and using \reff{eq:r_epsilon} as
well as the inclusion $B_n(r_\varepsilon(\bt))\subset B_n(\bt)$, we get  $x\in \bigcup _{n\in\N}B_n(\bt)$, which proves that $\mathrm{sk}(\bt)\subset \bigcup _{n\in
  \N}B_n(\bt)$. Then the first point follows from the fact that $\clo (\mathrm{sk}(\bt))=\bt$.

For the  second point, let us  suppose that $\bt$ is  height-regular and
binary,  and  let $x\in  \cl(B_n(\bt))$  for  some $n\in\N$.   Then,  by
definition  of $B_n(\bt)$,  $x$ is  the  top of  a subtree  of the  form
$\bt_y$, with $y\prec  x$ and, as $\bt$ is  height-regular, it therefore
belongs to $\cl^*(\bt)$.  Conversely,  let $x\in\cl^*(\bt)$.  Then there
exists  $y\in   \mathrm{sk}(\bt)$  such   that  $y^*=x$.   Let   us  set
$\varepsilon=d(y,x)>0$.     Then    $y\in    r_\varepsilon(\bt)$    and,
by \reff{eq:r_epsilon},  $y\in  B_n(\bt)$  for  some  $n\in\N$.   And  by
definition, we have $x=y^*\in\cl(B_n(\bt))$ for the same $n$.
\end{proof}

\subsection{Reversed tree}

The reversal of a tree is only defined for a height-regular binary tree $\bt$. As
already noticed, since $\bt$ is height regular, we have
$S_0(\bt)=\{\partial^*\}$ and
$B_0(\bt)=\lb\partial,\partial^*\rb$. Similarly, using the notations of Section
\ref{sec:backbones}, for every $i\in I_0$, as $\bt^i$ is also
height-regular, we have $B_0(\bt^i)=\lb x_i,x_i^*\rb$. For every $i\in
I_0$, we set $y_i'$
the unique point of $B_0(\bt)$ which is at the same height as $x_i^*$:
$$y_i'\in \lb \partial,\partial^*\rb,\ h_\bt(y'_i)=h_\bt(x_i^*).$$

We then  define recursively the  reversed backbones as follows.  We set,
for $n\ge 0$,
\[
\cR_0(\bt)=(\lb\partial^*,\partial\rb,d,\partial^*).
\]
(notice that the root of $\cR_0(\bt)$ is $\partial^*$) and for $n\ge 1$,
\[
\cR_n(\bt)=\cR_0(\bt)\circledast _{i\in
  I_0}\bigl(\cR_{n-1}(\bt^i),y_i'\bigr).
\]

The reversal procedure is illustrated on Figure \ref{fig:reversed}, the dashed
lines show where the trees are grafted on the reversed tree. Notice
that, for aesthetic purpose, inside a sub-tree, the branches are drawn from left to right
in decreasing order of their height.
\begin{figure}[H]
\begin{center}
\includegraphics[width=8cm]{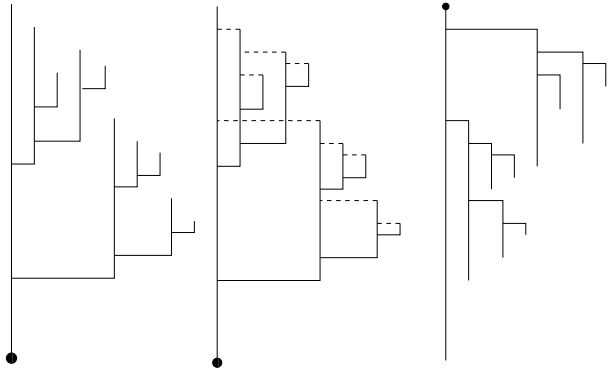}
\caption{A  backbone  $B_3(\bt)$  on  the left  and  its  reversed  tree
  $\cR_3(\bt)$ on the  right. The root of each tree  is represented by a
  bullet.}\label{fig:reversed}
\end{center}
\end{figure}

Intuitively, the leaves of $\cR_n(\bt)$
correspond to branching points of $B_n(\bt)$ (or to its root) and
conversely. Therefore, it is easy to check that $\cR_n(\bt)\in\T_0$
for every $n\in\N$. 

\begin{lem}
   \label{lem:Rnre}
We have for $\bt\in \T_0$:
\begin{equation}
   \label{eq:RB=R}
\cR_n\circ B_n (\bt)=\cR_n(\bt) , \quad
\cR_n\circ \cR_n (\bt)=B_n(\bt) 
\quad\text{and}\quad 
\cR_n\circ r_\varepsilon(\bt)= r_\varepsilon\circ \cR_n(\bt).
\end{equation}
\end{lem}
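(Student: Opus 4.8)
\textbf{Proof plan for Lemma \ref{lem:Rnre}.}

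The plan is to prove the three identities by induction on $n$, exploiting the recursive definitions of $B_n$ and $\cR_n$ together with the commutation relation \reff{eq:rB=Br} for $r_\varepsilon$ and $B_n$. The base case $n=0$ is immediate: $B_0(\bt)=\lb\partial,\partial^*\rb$ and $\cR_0(\bt)=\lb\partial^*,\partial\rb$ are the same segment with the two endpoints swapped as roots, so $\cR_0\circ B_0(\bt)=\cR_0(\bt)$ and $\cR_0\circ\cR_0(\bt)=B_0(\bt)$ hold by inspection; the third identity $\cR_0\circ r_\varepsilon(\bt)=r_\varepsilon\circ\cR_0(\bt)$ amounts to the observation that trimming the spine from either end produces the same shortened segment, and that $r_\varepsilon$ applied to a segment rooted at $\partial^*$ removes the last $\varepsilon$ near $\partial^*$, which matches $\cR_0$ of the tree trimmed near $\partial$. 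One should be a little careful here: $r_\varepsilon$ acts on a whole tree, not just its $B_0$, so the $n=0$ statement of the third identity must be read with $\cR_0$ understood as the operation extracting and reversing the initial backbone, and checked accordingly.

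For the inductive step, suppose the three identities hold for $n-1$ for all trees in $\T_0$. Write $(\tilde\bt^i, i\in I_0)$ for the connected components of $\bt\setminus B_0(\bt)$, with closures $\bt^i=\tilde\bt^i\cup\{x_i\}$, and recall $y_i'\in\lb\partial,\partial^*\rb$ is the point at height $h_\bt(x_i^*)$. The key structural observation is that the backbone $B_n(\bt)=B_0(\bt)\circledast_{i\in I_0}(B_{n-1}(\bt^i),x_i)$ has, as the components of its complement relative to $B_0$, exactly the trees $B_{n-1}(\bt^i)\setminus B_0(\bt^i)$ — that is, applying $B_0$ to $B_n(\bt)$ returns $B_0(\bt)$, and the sub-trees hanging off it are obtained by applying $B_{n-1}$ inside each $\bt^i$. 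Hence $\cR_n\circ B_n(\bt)=\cR_0(\bt)\circledast_{i\in I_0}(\cR_{n-1}(B_{n-1}(\bt^i)), y_i')$, and by the inductive hypothesis $\cR_{n-1}\circ B_{n-1}(\bt^i)=\cR_{n-1}(\bt^i)$, which is precisely $\cR_n(\bt)$. For the second identity, the recursive definition gives $\cR_n(\bt)=\cR_0(\bt)\circledast_{i\in I_0}(\cR_{n-1}(\bt^i),y_i')$; one checks that the initial backbone $B_0$ of $\cR_n(\bt)$ is $\cR_0(\bt)$ itself (the spine from $\partial^*$ down to $\partial$ realizes the maximal height in $\cR_n(\bt)$, since all grafted trees $\cR_{n-1}(\bt^i)$ are attached at $y_i'$ whose height plus the height of $\cR_{n-1}(\bt^i)$ does not exceed $H(\bt)$ — this uses binarity and height-regularity), so $\cR_n\circ\cR_n(\bt)=B_0(\bt)\circledast_{i\in I_0}(\cR_{n-1}(\cR_{n-1}(\bt^i)), x_i)=B_0(\bt)\circledast_{i\in I_0}(B_{n-1}(\bt^i),x_i)=B_n(\bt)$, again by the inductive hypothesis. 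The third identity follows the same template: $r_\varepsilon$ commutes with the grafting decomposition because trimming a grafted tree at scale $\varepsilon$ either removes it entirely or leaves it grafted at the same point, and on each piece one invokes the inductive hypothesis $\cR_{n-1}\circ r_\varepsilon=r_\varepsilon\circ\cR_{n-1}$ together with \reff{eq:rB=Br}.

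The main obstacle I anticipate is not the algebra of the induction but the bookkeeping of grafting points and roots under reversal — specifically, verifying carefully that $B_0(\cR_n(\bt))=\cR_0(\bt)$ and, dually, that the complement of $\cR_0(\bt)$ in $\cR_n(\bt)$ decomposes with the \emph{same} index set $I_0$ and with grafting points in correspondence $y_i'\leftrightarrow x_i$. This requires knowing that the top $x_i^*$ of $\bt^i$ becomes, after reversal, the point of $\cR_{n-1}(\bt^i)$ that is its root, and that its height is $h_\bt(x_i^*)$, which is exactly where $y_i'$ was placed; height-regularity is what guarantees all these heights are distinct so that the backbone and its reversal are genuinely well-defined and the recursion closes. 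I would also double-check the edge case where $\bt^i$ is reduced to a point (so $x_i^*=x_i$ and $\cR_{n-1}(\bt^i)$ is a single point), making sure the formulas degenerate correctly. Once these identifications are pinned down, all three equalities drop out of matching the two recursive expressions term by term.
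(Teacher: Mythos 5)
Your plan is correct, but it takes a genuinely different route from the paper. The paper's proof is very short: it declares the first two identities of \reff{eq:RB=R} obvious, observes that the third is obvious for trees with finitely many leaves, and then obtains the third identity for general $\bt\in\T_0$ by the chain $\cR_n\circ r_\varepsilon(\bt)=\cR_n\circ B_n\circ r_\varepsilon(\bt)=\cR_n\circ r_\varepsilon\circ B_n(\bt)=r_\varepsilon\circ \cR_n\circ B_n(\bt)=r_\varepsilon\circ \cR_n(\bt)$, using the first identity, then \reff{eq:rB=Br}, then the already-established finite case applied to $B_n(\bt)$, then the first identity again. You instead prove all three identities simultaneously by induction on $n$, matching the two recursive grafting decompositions term by term. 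What your approach buys is a self-contained verification of the first two identities (which the paper does not spell out) and a direct treatment of the third identity that does not route through the finite-leaf case; what the paper's approach buys is brevity, since the only nontrivial content is isolated in one four-step reduction. Your identifications of the grafting data are the right ones and do close the induction: for the second identity the key computation is that the top of the grafted $\cR_{n-1}(\bt^i)$ sits at height $H(\bt)-h_\bt(x_i)$ in $\cR_n(\bt)$, so re-reversal sends the grafting point back to $x_i$; for the first, one uses $H(B_{n-1}(\bt^i))=H(\bt^i)$ so that the point $y_i'$ is unchanged. One spot where your sketch is a little too quick is the third identity: trimming does not merely ``remove a grafted tree or leave it grafted at the same point'' --- it also shortens the spine by $\varepsilon$ at its top and lowers the height of each surviving grafted subtree by $\varepsilon$, and the equality of grafting points holds only because these two $\varepsilon$-shifts cancel (the point $y_i''$ of $\cR_0(r_\varepsilon(\bt))$ sits at height $(H(\bt)-\varepsilon)-(h_\bt(x_i)+H(\bt^i)-\varepsilon)=H(\bt)-h_\bt(x_i)-H(\bt^i)$, which is exactly the height of $y_i'$). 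That cancellation should be made explicit, but it is exactly the kind of bookkeeping you already flag as the main obstacle, so the plan is sound.
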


\begin{proof}
The first two  equalities are obvious and the last one is also obvious if
$\bt$ has a finite number of leaves. We just check that the last
equality holds for general $\bt\in \T_0$. We have:
\[
\cR_n\circ r_\varepsilon(\bt)=
\cR_n\circ B_n\circ r_\varepsilon(\bt)=
\cR_n\circ r_\varepsilon\circ B_n(\bt)=
r_\varepsilon\circ \cR_n\circ B_n(\bt)=
r_\varepsilon\circ \cR_n(\bt),
\]
   where we use the first equality of \reff{eq:RB=R} for the first
   equality, \reff{eq:rB=Br} for the second, the last equality of
   \reff{eq:RB=R} which holds  for $B_n(\bt)$ as it is height-regular and has a finite number of
 leaves for the third and \reff{eq:RB=R} for the last. 
\end{proof}

Furthermore, the sequence of trees $(\cR_n(\bt), {n\ge
  0})$ is non-decreasing. We endow $\bigcup_{n\ge
  0}\cR_n(\bt)$ with the natural distance denoted by $d^\cR$
and we define the reversed tree $\cR(\bt)$ as the completion of
$\bigcup_{n\ge
  0}\cR_n(\bt)$ with respect to the distance $d^\cR$. We give some
properties of the map $\cR$. 

\begin{cor}
   \label{cor:CR}
The map $\cR$ is a one-to-one measurable involution
(that is $\cR\circ
\cR (\bt)=\bt$) defined on $\T_0$. 
\end{cor}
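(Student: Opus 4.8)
The plan is to establish the three required properties — measurability, the involution identity, and (hence) injectivity — by exploiting the finite approximations $\cR_n$ and $r_\varepsilon$ and passing to the limit. First I would observe that each map $\bt\mapsto\cR_n(\bt)$ is Borel measurable on $\T_0$: it is built from the root-to-top spine $B_0$, the decomposition into the subtrees $(\bt^i,i\in I_0)$ hanging off that spine, the heights of their tops $x_i^*$, and a finite (or countable) iteration of grafting, all of which are measurable operations on $\T_0$ (the decomposition and the re-grafting depend measurably on $\bt$ because, by Lemma~\ref{lem:erased_finite} and \reff{eq:rB=Br}, on each level $r_\varepsilon(\bt)$ they reduce to finitely many continuous manipulations of finite trees, and $\T_0=\bigcap_{\varepsilon>0}r_\varepsilon^{-1}(\T_0^f)$ by Lemma~\ref{lem:T_0}). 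Then $\cR$ is a pointwise limit (in $d_{GH}$, via the monotone completion $\cR(\bt)=\clo\bigl(\bigcup_n\cR_n(\bt)\bigr)$) of the measurable maps $\cR_n$, so $\cR$ is measurable. Here I would use the last identity of \reff{eq:RB=R}, $\cR_n\circ r_\varepsilon=r_\varepsilon\circ\cR_n$, together with \reff{eq:r_epsilon}, to see that $r_\varepsilon(\cR(\bt))=\cR_n(r_\varepsilon(\bt))$ for $n$ large, which makes the convergence $\cR_n(\bt)\to\cR(\bt)$ explicit and uniform on each erased level; this also shows $\cR(\bt)\in\T_0$ since $\cl^*(\cR(\bt))$ is the countable image of the branching points of $\bt$.

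The core of the argument is the involution identity $\cR\circ\cR(\bt)=\bt$. The strategy is: prove it for finite trees directly, then upgrade by the erasure approximation. For $\bt\in\T_0^f$ with finitely many leaves, $\bt=B_N(\bt)$ for some $N$ by \reff{eq:r_epsilon}, so $\cR(\bt)=\cR_N(\bt)$, and then the second identity of \reff{eq:RB=R}, $\cR_N\circ\cR_N(\bt)=B_N(\bt)=\bt$, together with the fact (noted in the text before the corollary) that $\cR_N(\bt)\in\T_0$ is again finite, gives $\cR(\cR(\bt))=\cR_N(\cR_N(\bt))=\bt$. For general $\bt\in\T_0$, I would apply $r_\varepsilon$ to both sides and use commutation: by \reff{eq:RB=R} and \reff{eq:rB=Br},
\[
r_\varepsilon\bigl(\cR(\cR(\bt))\bigr)=\cR_n\bigl(r_\varepsilon(\cR(\bt))\bigr)=\cR_n\bigl(\cR_n(r_\varepsilon(\bt))\bigr)=B_n\bigl(r_\varepsilon(\bt)\bigr)=r_\varepsilon(\bt)
\]
for $n$ large enough (depending on $\bt,\varepsilon$, via Lemma~\ref{lem:erased_finite}), where the middle step needs the finite-tree case applied to $r_\varepsilon(\bt)\in\T_0^f$ — and here I must be slightly careful that the ``$\cR$'' appearing when computing $r_\varepsilon(\cR(\bt))$ is consistent with the finite backbone $\cR_n$, which is exactly the content of $r_\varepsilon\circ\cR=\cR_n\circ r_\varepsilon$ for $n$ large. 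Since $r_\varepsilon(\cR(\cR(\bt)))=r_\varepsilon(\bt)$ for all $\varepsilon>0$ and $\bigcup_{\varepsilon>0}r_\varepsilon(\cdot)$ recovers the skeleton (hence, after closure, the whole tree), we conclude $\cR(\cR(\bt))=\bt$.

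Finally, injectivity of $\cR$ is immediate from the involution identity: if $\cR(\bt)=\cR(\bt')$ then $\bt=\cR(\cR(\bt))=\cR(\cR(\bt'))=\bt'$; and $\cR$ maps $\T_0$ into $\T_0$ as noted, so it is a measurable bijection of $\T_0$ onto itself that equals its own inverse. The main obstacle I anticipate is the bookkeeping in the second paragraph: making precise, at each scale $\varepsilon$, that $\cR$ restricted through $r_\varepsilon$ genuinely coincides with the finite operation $\cR_n$ (so that the finite-tree identity can be invoked), and checking that the integer $n=n(\bt,\varepsilon)$ can be chosen uniformly enough for both nested applications of $\cR$; this is where \reff{eq:rB=Br}, \reff{eq:r_epsilon}, and the commutation relations in \reff{eq:RB=R} must be combined carefully rather than waved at.
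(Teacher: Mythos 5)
Your proposal is correct and follows essentially the same route as the paper's proof: the involution comes from $\cR_n\circ\cR_n=B_n$ together with the approximation of $\bt$ by its backbones (Lemma \ref{lem:B_n-->t}), injectivity is an immediate consequence, and measurability is obtained by reducing to finite trees via $r_\varepsilon$ and the commutation relations \reff{eq:RB=R} and then passing to the limit. Your treatment of the involution through $r_\varepsilon$ applied to both sides is a slightly more explicit bookkeeping of what the paper dismisses as ``readily implied,'' but it is the same argument in substance.
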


\begin{proof}
  Since $\cR_n(\bt)$ belongs to $\T_0$ for  all $n\in \N$, we deduce that
  $\cR(\bt)$ belongs  to $\T_0$.  The second  equality of \reff{eq:RB=R}
  and   Lemma  \ref{lem:B_n-->t}   readily  imply   that  $\cR$   is  an
  involution. It is therefore one-to-one.

Recall that the set  $r_\varepsilon(\T_0)$ is the set $\T^f_0$ of compact, height-regular trees with a finite
number of leaves. It is easy to see that $\cR$ is continuous when restricted to
$r_\varepsilon(\T_0)$. This gives that $\cR\circ r_\varepsilon$ is
measurable. Use the third equality in \reff{eq:RB=R} to deduce that 
$r_\varepsilon\circ \cR$ is measurable. As, for every $\bt\in\T$, we
have $d_{GH}(\bt,r_\varepsilon(\bt))\le \varepsilon$, we get that 
$\cR=\lim_{\varepsilon\to 0}r_\varepsilon\circ\cR$, 
which implies that $\cR$ is measurable. 
\end{proof}

\begin{rem}
There is no natural extension of $\cR$ to $\T$ (in particular because
$\cR$ is not uniformly continuous of $\T_0$). 
\end{rem}

\subsection{Reversed CRT}

We first check that  the Brownian CRT is height-regular. 

\begin{lem}\label{lem:reversed_CRT}
Let $\theta\ge 0$. Let $\tau$ be a Brownian CRT under the excursion measure $\N^{(\theta)}$. Then, we have that
$\N^{(\theta)}$-a.e., $\tau\in\T_0$. 
\end{lem}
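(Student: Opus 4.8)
The plan is to show that $\N^{(\theta)}$-a.e. the tree $\tau = F(e)$ is compact, binary, and height-regular, so that it lies in $\T_0$. Compactness is automatic since $\tau$ is coded by a continuous excursion with compact support, as recalled in Section~\ref{sec:CRT}. For the binary property, I would invoke the standard fact about the Brownian CRT: since $B^{(\theta)}$ is (a scaled, drifted) Brownian motion, its local minima are a.s.\ distinct, and each branching point of $T_g$ corresponds to a value $m_g(s,t)$ attained at an interior local minimum; the non-coincidence of distinct local minima forces every branching point to have out-degree exactly $2$. This is classical (see Duquesne--Le Gall), so I would state it with a brief pointer rather than reprove it.

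The substantive part is height-regularity. By Definition~\ref{def:h-reg} I must show that $\N^{(\theta)}$-a.e., for every $\varepsilon>0$ and every pair of distinct leaves (or distinct branching points) of $r_\varepsilon(\tau)$, the heights differ. Since there are only countably many rational $\varepsilon$ and, by monotonicity of $r_\varepsilon$, it suffices to control $r_\varepsilon(\tau)$ along a sequence $\varepsilon_k\downarrow 0$, the problem reduces to: for fixed $\varepsilon>0$, a.e.\ the finitely many leaves and branching points of $r_\varepsilon(\tau)$ have pairwise distinct heights. The height of a vertex of $T_g$ is a value $g(s)$ (for a leaf, $g$ at a local max; for a branching point, $g$ at the relevant local min, i.e.\ a value $m_g(s,t)$). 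So it is enough to know that, under $n^{(\theta)}$, the excursion $e$ a.s.\ takes pairwise distinct values at its distinct local extrema — equivalently, $e$ has no two distinct local maxima at the same level and no two distinct local minima at the same level, and no local max equal to a local min. This is again a standard regularity property of Brownian paths (local extrema are a.s.\ distinct), transported to the Itô excursion measure $n^{(\theta)}$ via absolute-continuity / decomposition arguments, or simply quoted.

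Concretely I would argue: under $n^{(\theta)}$, conditionally on $\zeta = H(\tau)$, the excursion away from its maximum decomposes (Williams-type decomposition) into independent Brownian-type pieces, each of which inherits the a.s.\ distinctness of local extrema from standard Brownian motion; alternatively one uses that on $\{\zeta > h\}$ the measure $n^{(\theta)}$ restricted to the path before/after a fixed level is absolutely continuous with respect to (pieces of) Brownian motion, for which distinctness of local extrema is classical. Either way one gets that $n^{(\theta)}$-a.e.\ all local maxima are distinct, all local minima are distinct, and no maximum equals a minimum; feeding this into the coding map $F$ gives height-regularity of $\tau$. The main obstacle is purely bookkeeping: matching the combinatorial vertices of $r_\varepsilon(\tau)$ (leaves vs.\ branching points) with the corresponding local extrema of $e$, and checking that the finitely many relevant levels are exactly local-extremum values, so that the path-level distinctness statement yields the tree-level statement. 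Once binary-ness and distinctness of local extrema are in hand, combining them with Lemma~\ref{lem:erased_finite} (finiteness of $\cl(r_\varepsilon(\tau))$) and a countable union over $\varepsilon\in\{1/k\}$ completes the proof that $\tau\in\T_0$ $\N^{(\theta)}$-a.e.
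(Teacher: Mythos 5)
Your proposal is correct and follows essentially the same route as the paper: the paper's proof also identifies the leaves and branching points of $r_h(\tau)$ with the $h$-maxima and $h$-minima of the excursion $e$, and concludes from the fact that $n^{(\theta)}$-a.e.\ two local extrema of $e$ occur at different levels (which simultaneously gives the binary property and height-regularity). Your additional remarks on reducing to countably many $\varepsilon$ and on transporting the distinctness of local extrema from Brownian motion to $n^{(\theta)}$ merely make explicit what the paper leaves implicit.
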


\begin{proof}
Let $h>0$. Following \cite{np:rpetpeodbm,np:bpbe}, we say that a
process $X$ admits a $h$-minimum (resp. a $h$-maximum) at time $t$
if there exist $s<t$ and $u>t$ such that $X_s=X_u=X_t+h$
(resp. $X_s=X_u=X_t-h$) and $X_r\ge X_t$ (resp. $X_r\le X_t$) for
every $r\in[s,u]$.

Then, if we denote by $e$ an  excursion under $n^{(\theta)}$ and $\tau$
the associated real tree, for a.e. $h$ the branching points
of $r_h(\tau)$  correspond to the $h$-minima of $e$ and each
leaf of $r_h(\tau)$ is associated with an $h$-maxima of
$e$. As $n^{(\theta)}$-a.e., two local extrema of the excursion $e$ have
different levels, we get  that $\tau\in\T_0$, $\N^{(\theta)}$-a.e. by
definition of $\T_0$. 
\end{proof}

Let $\tau$ be a Brownian CRT under the excursion measure
$\N^{(\theta)}$, with $\theta\geq 0$.
We keep the notations of Section
\ref{sec:backbones}: we set $B_0(\tau)=\lb\partial,\partial^*\rb$
and set $(\tau_i, \, i\in I_0)$ the closures of the connected components
of $\tau\setminus B_0(\tau)$ viewed as trees in $\T$ rooted
respectively at point $x_i\in B_0(\tau)$ so that
$\tau=B_0(\tau)\circledast_{i\in I_0}(\tau_i,x_i).$

\begin{lem}\label{lem:ppp}
  Let  $\theta\geq   0$.  Under   $\N^{(\theta)}$,  the   point  measure
  $\sum_{i\in I_0}\delta_{(h-u_i-H(\tau_i),\tau_i)}$ on $[0,h]\times \T$
  is, conditionally given $\{H(\tau)=h\}$,  a Poisson point measure with
  intensity
\begin{equation}
   \label{eq:intensiteppp}
2 \beta \ind_{(0,h)}(u)\,  du\,\N^{(\theta)}[d\bt,\ H(\bt)\le h-u].
\end{equation}
\end{lem}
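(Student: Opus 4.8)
The plan is to describe the sub-trees grafted on the spine $B_0(\tau)=\lb\partial,\partial^*\rb$ directly from the excursion $e$ coding $\tau$, using the classical It\^o excursion decomposition of Brownian motion at its maximum. First I would condition on $\{H(\tau)=h\}$, which amounts to conditioning the excursion $e$ under $n^{(\theta)}$ on $\{\zeta(e)=h\}$, and let $s_0$ be the (a.s.\ unique, by Lemma~\ref{lem:reversed_CRT}) time at which $e$ attains its maximum $h$. The spine from $\partial$ to $\partial^*$ is then the image under $p_e$ of the graph of $r\mapsto \inf_{[r,s_0]}e$ on $[0,s_0]$ together with the analogous decreasing part on $[s_0,\sigma]$; parametrizing the spine by height $u\in[0,h]$, the point at height $u$ is $p_e$ of the last time before $s_0$ (and first time after $s_0$) that $e$ equals $u$. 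The trees $\tau_i$ grafted on the spine are exactly the real trees coded by the excursions of $e$ above its ``past-infimum-towards-$s_0$'' process, i.e.\ above the decreasing rearrangement of $e$ on either side of $s_0$.

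Next I would invoke the description of Brownian motion viewed from its maximum: by a theorem of Denisov (or via the Vervaat/Bismut decomposition, or via the Williams path decomposition used in \cite{np:rpetpeodbm}), an excursion of $B^{(\theta)}$ conditioned to reach height $h$ and killed at its maximum splits, at the maximum, into two independent pieces each distributed as $B^{(\theta)}$ run until it first hits $-h$; equivalently, and more usefully here, the process $u\mapsto (\text{excursions of }e\text{ above level }u$, on the pre-maximum side$)$ is, in the height variable $u$ decreasing from $h$ to $0$, a Poisson point process. Concretely, writing the pre-maximum part of $e$ in terms of its future infimum process $\underline e^{s_0}_r=\inf_{[r,s_0]}e$, the excursions of $e$ above $\underline e^{s_0}$ form, indexed by the level $v=\underline e^{s_0}_r$, a Poisson point measure whose intensity is $dv$ times the law of an excursion of $B^{(\theta)}$ below $0$ conditioned to stay above $-(h-v)$, i.e.\ an excursion of height $\le h-v$. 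The normalization constant $2\beta$ comes from the local time normalization \reff{eq:normlalisation}: one checks that the rate at which spine-excursions of height exceeding $\varepsilon$ accumulate matches $2\beta\,c_\theta(\varepsilon)$ with $c_\theta$ as in \reff{eq:def-c}, which pins the intensity measure to $2\beta\,\ind_{(0,h)}(u)\,du\,\N^{(\theta)}[d\bt,H(\bt)\le h-u]$ after the change of variable $v=h-u$ and after combining the two sides of the maximum (which by independence just doubles a rate $\beta$ into $2\beta$). The translation from ``excursion of height $\le h-v$ of $B^{(\theta)}$'' to ``$\N^{(\theta)}[d\bt,H(\bt)\le h-u]$'' is exactly the definition of the Brownian CRT in Section~\ref{sec:CRT}, together with $H(\tau_i)=\zeta$ of the coding excursion and $u_i=h_\tau(x_i)$ being the grafting height.

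Finally I would assemble these pieces: each grafted tree $\tau_i$ is the CRT coded by the $i$-th spine-excursion, sitting at grafting height $u_i=h-v_i$ where $v_i$ is the level of that excursion measured from $0$; so $H(\tau_i)=\zeta_i\le h-u_i$ automatically, and $h-u_i-H(\tau_i)\ge 0$, consistent with the claimed support $[0,h]$. The Poisson structure in the level variable $v$ transports to a Poisson structure in $u=h-v$ with the same intensity density, and the conditioning on the height $\le h-u$ is precisely what survives. Passing from the two independent one-sided pieces to a single Poisson measure with rate $2\beta$ rather than $\beta$ is the superposition of two independent Poisson point measures with rate $\beta$ each. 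The main obstacle is making rigorous the ``Brownian motion seen from its maximum'' decomposition in the excursion-measure ($n^{(\theta)}$, $\sigma$-finite) setting rather than for a genuine probability law, and tracking the constant through the local-time normalization \reff{eq:normlalisation}; this is where I would lean on the precise statements in \cite{np:rpetpeodbm} and on Section~7 of \cite{cd:spsmrcatsbp} (which already records $c_\theta$), reducing the computation to identifying $n^{(\theta)}[\zeta\ge\varepsilon]=c_\theta(\varepsilon)$ with the Poisson rate of tall spine-excursions. A cleaner alternative, which I would pursue in parallel, is to prove the statement first for $r_\varepsilon(\tau)$ — where by Lemma~\ref{lem:erased_finite} everything is finite and the decomposition is the elementary Galton--Watson-type splitting at the maximal leaf of the discrete tree — and then pass to the limit $\varepsilon\to0$ using \reff{eq:rB=Br}, \reff{eq:r_epsilon} and the monotone convergence of Poisson point measures under the restriction maps $r_\varepsilon$, which compatibly restrict both the spine and the family of grafted trees.
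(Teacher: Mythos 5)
There is a genuine gap. The statement is \emph{not} the Williams spine decomposition itself (which gives that $\sum_{i\in I_0}\delta_{(u_i,\tau_i)}$, indexed by the grafting heights $u_i=h_\tau(x_i)$, is conditionally Poisson with intensity \reff{eq:intensiteppp}); it is the assertion that the point measure re-indexed by the first coordinate $h-u_i-H(\tau_i)$ is \emph{again} Poisson with the \emph{same} intensity. You spend essentially all of your effort re-deriving the spine decomposition from excursion theory (Denisov/Bismut/Williams, normalization of $n^{(\theta)}$, etc.) --- which the paper simply imports from the Williams decomposition reference and which is a legitimate, if heavier, alternative --- but you never carry out the step that is the actual content of the lemma: computing the pushforward of the intensity \reff{eq:intensiteppp} under the map $(u,\bt)\mapsto(h-u-H(\bt),\bt)$ and checking it is unchanged. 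By the mapping theorem for Poisson point measures the image is automatically Poisson, so everything reduces to this intensity computation; the paper does it via a Laplace functional and the change of variables $v=h-u-H(\tau)$, the point being that for each fixed tree height $\zeta$ the map $u\mapsto h-u-\zeta$ is a Lebesgue-measure-preserving involution of $(0,h-\zeta)$, which is exactly the range allowed by the constraint $H(\bt)\le h-u$.

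Where you address this step, the argument is incorrect: you claim ``the Poisson structure in the level variable $v$ transports to a Poisson structure in $u=h-v$ with the same intensity density.'' First, the relevant map is not $u\mapsto h-u$ but $(u,\bt)\mapsto (h-u-H(\bt),\bt)$, which depends on the mark through its height; second, even the map $v\mapsto h-v$ does \emph{not} preserve \reff{eq:intensiteppp}: it sends $2\beta\ind_{(0,h)}(v)\,dv\,\N^{(\theta)}[d\bt,\ H(\bt)\le h-v]$ to $2\beta\ind_{(0,h)}(u)\,du\,\N^{(\theta)}[d\bt,\ H(\bt)\le u]$, a different measure. (Your notation also oscillates between $v$ being the grafting level and $v=h-u$ being the depth below the maximum, which makes the stated constraint ``height $\le h-v$'' inconsistent between the two occurrences.) To repair the proof you must insert the explicit verification that $\int_0^h du\,\N^{(\theta)}\bigl[f(h-u-H(\tau),\tau)\ind_{\{H(\tau)\le h-u\}}\bigr]=\int_0^h dv\,\N^{(\theta)}\bigl[f(v,\tau)\ind_{\{H(\tau)\le h-v\}}\bigr]$ for nonnegative $f$, by Fubini and the substitution $v=h-u-H(\tau)$ at fixed $\tau$.
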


\begin{proof}
By the Williams decomposition, see \cite{ad:wdlcrtseppnm}, the point measure $\sum_{i\in
  I_0}\delta_{(u_i,\tau_i)}$ is under $\N^{(\theta)}$, conditionally given
$\{H(\tau)=h\}$, a Poisson point measure with intensity
\reff{eq:intensiteppp}. 
Then, for every non-negative function $\varphi$ on $[0,h]\times \T$, we
have
\begin{multline*}
\N^{(\theta)}  \left[\expp{-\sum_{i\in
      I_0}\varphi(h-u_i-H(\tau_i),\tau_i)}\Bigm| H(\tau)=h\right]\\
\begin{aligned}
& =\exp\left(-\int_0^h
  2\beta du\,\N^{(\theta)}\left[\left(1-\expp{-\varphi(h-u-H(\tau),\tau)}\right)\ind_{\{
    H(\tau)\le h-u\}}\right]\right)\\ 
&=\exp\left(-2\beta
  \N^{(\theta)}\left[\int_0^{h-H(\tau)}du \left(1-\expp{-\varphi(h-
        u-H(\tau),\tau)}\right)\ind_{\{H(\tau)\leq h\}}\right]\right)\\  
&=\exp\left(-2\beta \N^{(\theta)}\left[\int_0^{h-H(\tau)}dv
    \left(1-\expp{-\varphi(v,\tau)}\right)\ind_{\{H(\tau)\leq
      h\}}\right]\right)\\ 
& =\exp\left(-\int_0^h
  2 \beta dv\,\N^{(\theta)}\left[\left(1-\expp{-\varphi(v,\tau)}\right)\ind_{\{ H(\tau)\le
    h-v\}}\right]\right), 
\end{aligned}
\end{multline*}
where we performed the change of variables $v=h-u-H(\tau)$ for the third
equality. The lemma follows.
\end{proof}

\begin{theo}\label{theo:reversed_CRT}
Let $\theta\ge 0$. Let $\tau$ be a Brownian CRT under the excursion measure $\N^{(\theta)}$. Then, $\cR(\tau)$ is distributed as $\tau$.
\end{theo}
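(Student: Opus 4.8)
The strategy is to reduce the statement about the infinite (completed) tree $\cR(\tau)$ to a statement about the finite erased trees $r_\varepsilon(\cR(\tau))$, and to prove the latter by induction on the (a.s.\ finite) number of backbones needed to exhaust $r_\varepsilon(\tau)$. Since $d_{GH}(\bt,r_\varepsilon(\bt))\le \varepsilon$ and, by the last equality of \reff{eq:RB=R}, $r_\varepsilon\circ\cR(\tau)=\cR_n\circ r_\varepsilon(\tau)$ for $n$ large enough, it suffices to show that for every $\varepsilon>0$ the tree $\cR_n\circ r_\varepsilon(\tau)$ has the same law as $r_\varepsilon(\tau)$, i.e.\ that the law of $\tau$ under $\N^{(\theta)}$ is invariant, after erasing, under the finite reversal maps. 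First I would record the description of $r_\varepsilon(\tau)$: conditionally on $\{H(\tau)=h\}$ it is $\lb\partial,\partial^*\rb$ (a segment of length $h$) on which are grafted, according to a Poisson point measure, the erased subtrees; by Lemma \ref{lem:ppp}, the point measure $\sum_i \delta_{(h-u_i-H(\tau_i),\tau_i)}$ recording, for each subtree $\tau_i$, the \emph{distance from the top $\partial^*$ down to the branch point of $\tau_i$'s highest leaf-level} together with $\tau_i$ itself, is Poisson with intensity $2\beta\,\ind_{(0,h)}(u)\,du\,\N^{(\theta)}[d\bt,\,H(\bt)\le h-u]$, with the additional constraint that each grafted $\tau_i$ itself has height $\le h-u_i$ strictly less (by height-regularity) than the spine it sits under.

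The key point is that Lemma \ref{lem:ppp} is exactly the reversal statement ``for one level of backbone.'' Indeed, the reversal $\cR_0$ keeps the spine $B_0(\tau)=\lb\partial,\partial^*\rb$ as a segment of the same length $h$ but re-roots it at $\partial^*$, and re-grafts each subtree $\tau_i$ at the point $y_i'$ of the spine at height $h_\tau(x_i^*)$; measured from the \emph{new} root $\partial^*$, this grafting point sits at distance $h - h_\tau(x_i^*) = h - (h_\tau(x_i)+H(\tau_i)) = h-u_i-H(\tau_i)$ along the reversed spine. So the pair (position along reversed spine, grafted tree) for $\cR_0(\tau)$ is precisely the pair $(h-u_i-H(\tau_i),\tau_i)$ whose law Lemma \ref{lem:ppp} identifies as Poisson with the symmetric intensity \reff{eq:intensiteppp} — which is the same intensity governing the grafting positions $u_i$ in $\tau$ itself. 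Hence $\cR_0$, at the level of the decorated spine, preserves the conditional law given $\{H(\tau)=h\}$; integrating over $h$ (the law of $H(\tau)=\zeta$ is unchanged since the reversed spine has the same length) gives that the ``first-level decorated backbone'' of $\cR(\tau)$ is distributed as that of $\tau$. Because the subtrees $(\tau_i,i\in I_0)$ appearing in the Poisson measure are, conditionally, independent Brownian CRTs under $\N^{(\theta)}[\cdot\mid H\le h-u_i]$, and the definition of $\cR$ is recursive ($\cR_n(\bt)=\cR_0(\bt)\circledast_{i\in I_0}(\cR_{n-1}(\bt^i),y_i')$), one then proves by induction on $n$ that $\cR_n\circ r_\varepsilon(\tau)\overset{(d)}{=}r_\varepsilon(\tau)$: apply the one-step identity, then the induction hypothesis to each $\cR_{n-1}$ applied to the conditionally-i.i.d.\ $\tau_i$'s, using that grafting is measurable and that a Poisson point measure with i.i.d.-marked atoms is mapped to one of the same type when each mark is transformed by a law-preserving map.

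Finally I would assemble the pieces: fix $\varepsilon>0$; by \reff{eq:r_epsilon} choose $N=N(\tau,\varepsilon)$ with $r_\varepsilon(\tau)=B_N\circ r_\varepsilon(\tau)$, note $\cR_N\circ r_\varepsilon(\tau)=r_\varepsilon\circ\cR(\tau)$ by Lemma \ref{lem:Rnre}, invoke the induction to get $r_\varepsilon(\cR(\tau))\overset{(d)}{=}r_\varepsilon(\tau)$, and let $\varepsilon\to0$ using $d_{GH}(\bt,r_\varepsilon(\bt))\le\varepsilon$ together with Lemma \ref{lem:reversed_CRT} (so that everything lives on $\T_0$, where $\cR$ is defined and measurable by Corollary \ref{cor:CR}) to conclude $\cR(\tau)\overset{(d)}{=}\tau$ on $\T$. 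The main obstacle is the bookkeeping in the one-step identity: one must check carefully that the conditioning structure (subtrees grafted under a spine of height $h$ are conditioned to have height $\le h-u$, and by height-regularity the relevant extrema are a.s.\ distinct so the reversal is well-defined) is \emph{symmetric} under $\cR_0$ — which is exactly the content of the change of variables $v=h-u-H(\tau)$ in the proof of Lemma \ref{lem:ppp} — and that the recursive/Poissonian independence lets this local identity propagate through all levels without measure-theoretic slippage when passing from the finite trees $B_N\circ r_\varepsilon$ to the completed tree $\cR(\tau)$.
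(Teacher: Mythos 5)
Your proposal is correct and follows essentially the same route as the paper: the heart of both arguments is that Lemma \ref{lem:ppp} makes the Poisson intensity of the decorated spine symmetric under $u\mapsto h-u-H(\tau_i)$, which gives the one-level identity, and an induction on backbone levels then shows $B_n(\tau)\overset{(d)}{=}\cR_n(\tau)$ for all $n$ before passing to the limit. The only (immaterial) difference is that you pass to the limit through the erased trees $r_\varepsilon$, whereas the paper concludes directly from Lemma \ref{lem:B_n-->t}, i.e.\ from the convergence of $B_n(\tau)$ to $\tau$ and of $\cR_n(\tau)$ to $\cR(\tau)$.
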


\begin{proof}
To prove the theorem, it suffices to prove, using Lemma
\ref{lem:B_n-->t},  that for every $n\in\N$,
$B_n(\tau)$ and $\cR_n(\tau)$ are equally distributed, which we prove
by induction.

First, as $\N^{(\theta)}$-a.e. $\tau\in\T_0$, we have
$B_0(\tau)=\cR_0(\tau)$ (viewed as equivalence classes). They
have consequently the same distribution.

Suppose now that $B_{n-1}(\tau)$ and $\cR_{n-1}(\tau)$ are equally
distributed for some $n\ge 1$. Recall that
\[
B_n(\tau)=B_0(\tau)\circledast_{i\in
  I_0}(B_{n-1}(\tau_i),x_i)\quad\mbox{and}\quad\cR_n(\tau)=\cR_0(\tau)\circledast_{i\in 
  I_0}(\cR_{n-1}(\tau_i),y'_i),
\]
where for  every $i\in I_0$, $y'_i$  is the unique point  of $B_0(\tau)$
which   has   the    same   height   as   $x_i^*$    i.e.    such   that
$h_\tau(y'_i)=h_\tau(x_i)+H(\tau_i)$.   Notice  that,  as  a  vertex  of
$\cR_0(\tau)$,              $y'_i$               has              height
$h_{\cR_0(\tau)} (y'_i)=H(\tau) - h_\tau(x_i) -H(\tau_i)$.

Thanks to  Lemma  \ref{lem:ppp}, conditionally  given $B_0(\tau)$,  the two families
$((h_\tau(x_i),\tau_i),         \,         i\in        I_0)$         and
$((h_{\cR_0(\tau)}  (y'_i),\tau_i),   \,  i\in   I_0)$  have   the  same
distribution.    By    the    induction   assumption,    the    families
$((h_\tau(x_i),B_{n-1}(\tau_i)), \, i\in              I_0)$              and
$((h_{\cR_0(\tau)}(y'_i),\cR_{n-1}(\tau_i)), \, i\in  I_0)$  have  also  the
same distribution. This implies that, under $\N^{(\theta)}$, $B_n(\tau)$
and $\cR_n(\tau)$ are equally distributed.
\end{proof}

The  reversal operation  is natural  on  the Brownian  CRT but  it has  no
elementary representation for the underlying Brownian excursion.

Recall the definition in Section \ref{sec:CRT} of the local time measure
$\ell_a(dx)$ of a Brownian CRT $\tau$ at level $a$. We denote by
$\ell_a(\tau)$ the total mass of this measure. We recover the
time-reversal distribution invariance of the local time of the Brownian
excursion. 

\begin{cor}\label{cor:reverse-local-time}
Let $\theta\geq 0$. $\N^{(\theta)}$-a.e., for every $a\ge 0$, $\ell_a(\tau)=\ell_{H(\tau)-a}(\cR(\tau))$.
\end{cor}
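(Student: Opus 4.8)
The plan is to deduce the corollary from the distributional identity $\cR(\tau) \overset{d}{=} \tau$ established in Theorem \ref{theo:reversed_CRT}, but with care: that theorem only gives equality in law, whereas Corollary \ref{cor:reverse-local-time} asserts an almost sure identity between two functionals of the \emph{same} tree $\tau$. So the real content is not distributional but structural — one must show that, for a fixed height-regular binary tree $\bt$, the local time at level $a$ is intrinsically reversed under $\cR$. First I would recall that the local time measure $\ell_a(dx)$ of the Brownian CRT is (up to normalization) the measure on $\cl(r_a(\tau))$ giving mass to the leaves of the $a$-trimmed tree, or equivalently can be recovered as a limit of rescaled counting measures on the branching points of $r_\varepsilon(\tau)$ at height $a$ as $\varepsilon\to 0$. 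The key combinatorial fact is that $\cR$ interchanges, level by level \emph{measured from the opposite end}, the role of branching points and tops/leaves: a branching point of $\bt$ at height $a$ becomes, in $\cR(\bt)$, a leaf at height $H(\bt)-a$, and conversely. This is exactly the content built into the backbone construction and is already implicit in Lemma \ref{lem:Rnre} (via $\cR_n\circ r_\varepsilon = r_\varepsilon\circ\cR_n$) and in Lemma \ref{lem:B_n-->t}.

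The key steps, in order, would be as follows. (i) Fix $\theta\geq 0$ and work $\N^{(\theta)}$-a.e., so that $\tau\in\T_0$ by Lemma \ref{lem:reversed_CRT}. (ii) Identify $\ell_a(\tau)$ with an explicit functional of the trimmed trees: for $\varepsilon>0$ small, the number of branching points of $r_\varepsilon(\tau)$ at heights in $[0,a]$, suitably normalized by $c_\theta(\varepsilon)^{-1}$ or by the deterministic factor appearing in the convergence of trimming approximations, converges to $\ell_a(\tau)$ as $\varepsilon\to 0$; here one invokes the standard relation between the local time of the Brownian excursion and the number of $\varepsilon$-minima below level $a$ (cf. the proof of Lemma \ref{lem:reversed_CRT}, which identifies branching points of $r_h(\tau)$ with $h$-minima of $e$). (iii) Apply the exact identity $\cR_n\circ r_\varepsilon = r_\varepsilon\circ \cR_n$ from \reff{eq:RB=R}, together with the fact that $\cR$ sends a branching point of height $a$ in $r_\varepsilon(\tau)$ to a leaf of height $H(\tau)-a-\varepsilon$ (or $H(\tau)-a$ in the limit) in $r_\varepsilon(\cR(\tau))$, to get that the branching-point counting measure of $r_\varepsilon(\tau)$ on $[0,a]$ equals the leaf counting measure of $r_\varepsilon(\cR(\tau))$ on $[H(\tau)-a,H(\tau)]$, up to negligible corrections. (iv) Pass to the limit $\varepsilon\to 0$ to recover $\ell_a(\tau)=\ell_{H(\tau)-a}(\cR(\tau))$ for each fixed $a$, and then upgrade to "for every $a\geq 0$" using monotonicity/continuity in $a$ of both sides (the map $a\mapsto\ell_a(\tau)$ being continuous $\N^{(\theta)}$-a.e., as $\ell_a$ is the occupation density of the continuous excursion $e$), so that an identity on a countable dense set of $a$ extends to all $a$. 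One also notes $\ell_a(\tau)=\ell_a(\cR(\tau))=0$ for $a\notin[0,H(\tau)]$ by the convention fixed in Section \ref{sec:CRT}, which handles the endpoints.

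The main obstacle I expect is step (ii)–(iii): making rigorous the statement that the normalized branching-point counting measures of $r_\varepsilon(\tau)$ converge to $\ell_a(\tau)$ and that $\cR$ \emph{exactly} transports these discrete measures with only an $O(\varepsilon)$ vertical shift. One has to be careful that $\cR$ does not a priori preserve the normalizing constants $c_\theta(\varepsilon)$, and that the "top" vertices $x_i^*$ used to define the reversed grafting heights sit at height $h_\tau(x_i)+H(\tau_i)$, not at a multiple of $\varepsilon$; the bookkeeping of these offsets is where the argument is delicate. An alternative, perhaps cleaner, route would avoid the trimming approximation entirely: interpret $\ell_a(\tau)$ via the mass of the set $\{x\in\tau : h_\tau(x)=a\} = \cl(r_a(\tau))$ (the "level-$a$ slice") and use that $\cR$ maps the level-$a$ slice of $\tau$ onto the level-$(H(\tau)-a)$ slice of $\cR(\tau)$ while the local time measure is characterized intrinsically (not via the embedding $p_g$) as in Theorem 4.2 of \cite{dlg:pfalt}; then the identity becomes a direct consequence of the measurability and involution properties of $\cR$ from Corollary \ref{cor:CR} together with the definition of $\cR$ on backbones. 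I would present whichever of these two the authors' earlier lemmas support most directly; given that \reff{eq:RB=R} is stated precisely in the form $\cR_n\circ r_\varepsilon = r_\varepsilon\circ\cR_n$, the trimming route is the one the paper is set up for, and I would push that through, isolating the vertical-offset estimate as the one technical lemma that needs a short separate argument.
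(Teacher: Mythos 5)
Your overall strategy is the paper's: approximate $\ell_a$ by vertex counts in the trimmed tree $r_\varepsilon(\tau)$, use the exact compatibility of $\cR$ with $r_\varepsilon$ to transport those counts to level $H(\tau)-a$ of $\cR(\tau)$, let $\varepsilon\to 0$, and finally upgrade from each fixed $a$ to all $a$ via continuity of $a\mapsto \ell_a(\tau)$. However, your step (ii) chooses the wrong counting functional, and this is a genuine gap rather than a presentational one. The number of \emph{branching points} of $r_\varepsilon(\tau)$ at heights in $[0,a]$, normalized by $c_\theta(\varepsilon)^{-1}$, is a cumulative quantity: it converges (up to a constant) to the integrated local time $\int_0^a \ell_u\,du$, not to $\ell_a(\tau)$. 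The functional the paper invokes (Theorem 4.2 of \cite{dlg:pfalt}) is the cardinality of the \emph{level set} of the trimmed tree, $\frac{1}{\varepsilon}\,\Card\{x\in r_\varepsilon(\tau),\ h_\tau(x)=a-\varepsilon\}$, i.e.\ the number of subtrees rooted at level $a-\varepsilon$ that reach level $a$ (the $\varepsilon$-upcrossings of the excursion); the same limit holds with $a$ in place of $a-\varepsilon$. These level-set points are generically neither leaves nor branching points of $r_\varepsilon(\tau)$, so the ``branching points become leaves'' heuristic, while correct, is not the mechanism that makes the local times match.

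With the level-set count, the ``vertical-offset estimate'' that you explicitly leave as an unresolved technical lemma disappears: one has the \emph{exact} identity, for every $\bt\in\T_0$ and every $\varepsilon>0$,
\[
\Card\{x\in r_\varepsilon(\bt),\ h_\bt(x)=a-\varepsilon\}
=\Card\{x\in r_\varepsilon(\cR(\bt)),\ h_{\cR(\bt)}(x)=H(\bt)-a\}.
\]
Indeed, a point of $r_\varepsilon(\bt)$ at height $a-\varepsilon$ corresponds to a branch $\lb x_i,x_i^*\rb$ of the recursive backbone decomposition whose height interval $[h_\bt(x_i),h_\bt(x_i^*)]$ contains $[a-\varepsilon,a]$ (since for $x$ on that branch, $H(\bt_x)=h_\bt(x_i^*)-h_\bt(x)$), and $\cR$ re-attaches that branch so that it occupies exactly the reflected interval $[H(\bt)-h_\bt(x_i^*),\,H(\bt)-h_\bt(x_i)]$ in $\cR(\bt)$; the two $\varepsilon$-shifts (one from trimming, one from reflection) cancel. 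Since your write-up both picks a functional that does not approximate $\ell_a$ and defers precisely this identity to a lemma you do not supply, the proof as proposed does not go through without the substitution above. Your final continuity step (passing from a countable dense set of levels to all $a$, with the convention $\ell_a=0$ off $[0,H(\tau)]$) matches the paper's conclusion.
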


\begin{proof}
Let $a> 0$. Using Theorem 4.2 of \cite{dlg:pfalt}, we have that $\N^{(\theta)}$-a.e.:
\[
\ell_a(\tau  )=\lim_{\varepsilon\to 0}\frac{1}{\varepsilon}\Card\{x\in
r_\varepsilon(\tau),\ h_\tau(x)=a-\varepsilon\}
=\lim_{\varepsilon\to 0}\frac{1}{\varepsilon}\Card\{x\in
r_\varepsilon(\tau),\ h_\tau(x)=a\}.
\]
But,  by  construction,   we  have,  for  every   $\bt\in\T$  and  every
$\varepsilon>0$,
\[
\Card(\{x\in
r_\varepsilon(\bt),\ h_\bt(x)=a-\varepsilon\})
=
\Card(\{x\in
r_\varepsilon(\cR(\bt)),\ h_{\cR(\bt)}(x)=H(\bt)-a\}).
\]
Therefore, we have that for every $a> 0$, $\N^{(\theta)}$-a.e., $\ell_a(\tau)=\ell_{H(\tau)-a}(\cR(\tau))$.
Then, consider the continuous version of the local time to conclude.
\end{proof}

\subsection{Extension to a forest}\label{sec:extension}

\bigskip
For $\theta\geq 0$, we define the Brownian forest as the forest
$\cf=((h_i,\tau_i), \, i\in I)$ where $\sum_{i\in I} \delta_{h_i, \tau_i}$ is a Poisson point measure on
$\R\times \T$ with intensity $2\beta dh\, \N^{(\theta)}[d\tau]$ and we
denote by $\P^{(\theta)}$ its distribution.

\begin{rem}
   \label{rem:CD}
This
Brownian forest can be viewed as the genealogical tree of a stationary
continuous-state branching process (associated with the branching
mechanism $\psi_\theta$ defined in \reff{eq:psi}), see
\cite{cd:spsmrcatsbp}. To be more precise, for every $i\in I$ let 
$(\ell^{(i)}_a)_{a\ge 0}$ be the local time measures of the tree
$\tau_i$. For every $t\in\R$, we define the size $Z_t$ of the population
at time $t$ by 
\begin{equation}\label{eq:Zt}
Z_t=\sum_{i\in I}\ell^{(i)}_{t-h_i}(\tau_i),
\end{equation}
where we recall that the local time $\ell_a(\tau)$ of the CRT $\tau$ is
zero for $a\not\in [0, H(\tau)]$. For $\theta=0$, we have $Z_t=+\infty$ a.s. for every $t\in\R$. For $\theta>0$, the process $(Z_t, {t\ge 0})$ is a stationary Feller diffusion,
solution of the SDE 
\[
dZ_t=\sqrt{2\beta Z_t}\, dB_t+2\beta(1-\theta Z_t)dt.
\]
\end{rem}

A forest $\bff=  ((h_i,\bt_i), \, i\in I)$ is said  to be height-regular
if:
\begin{itemize}
\item for every $i\in I$, $\bt_i\in\T_0$;
\item for   every   $i,j\in   I$,  if  $i\ne   j$, then  $h_i\ne   h_j$   and
  $h_i+H(\bt_i)\ne h_j+H(\bt_j)$.
\end{itemize}

We define the reverse of a height-regular forest $\bff=((h_i,\bt_i), \, i\in I)$ as the forest
\[
\cR(\bff)=((-h_i-H(\bt_i),\cR(\bt_i)), \, i\in I).
\]

\begin{lem}\label{lem:reversed_forest}
Let $\theta\geq 0$. Let $((h_i,\tau_i), \, i\in I)$ be a Brownian forest under $\P^{(\theta)}$. Then the point process
\[
\sum_{i\in I}\delta _{(-h_i-H(\tau_i),\tau_i)}(dh, d\bt)
\]
is a Poisson point process on $\R\times \T$ with intensity $2\beta dh\,
\N^{(\theta)}[d\bt]$. 
\end{lem}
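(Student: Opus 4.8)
The plan is to realize the point process in the statement as the image of the original Brownian forest under a single measurable map, and then to apply the mapping theorem for Poisson point processes. Write $N=\sum_{i\in I}\delta_{(h_i,\tau_i)}$ for the Brownian forest, which by definition of $\P^{(\theta)}$ is a Poisson point process on the Polish space $\R\times\T$ with intensity $\mu(dh,d\bt)=2\beta\,dh\,\N^{(\theta)}[d\bt]$. Define the map
\[
\Phi:\R\times\T\longrightarrow\R\times\T,\qquad \Phi(h,\bt)=(-h-H(\bt),\bt),
\]
which leaves the tree coordinate untouched and only shifts the height in a $\bt$-dependent way. Since the height functional $H$ is continuous on $(\T,d_{GH})$, the map $\Phi$ is measurable, and the point process appearing in the statement is precisely the push-forward $\Phi_*N=\sum_{i\in I}\delta_{\Phi(h_i,\tau_i)}$.

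By the mapping theorem for Poisson point processes, which applies here because $\mu$ is $\sigma$-finite (both $2\beta\,dh$ on $\R$ and the It\^o measure $\N^{(\theta)}$ are $\sigma$-finite), the image $\Phi_*N$ is again a Poisson point process, with intensity the push-forward measure $\Phi_*\mu$. It therefore remains only to verify that $\Phi_*\mu=\mu$. For a non-negative measurable test function $f$ on $\R\times\T$, applying Tonelli and the definition of the push-forward gives
\[
\int f\,d(\Phi_*\mu)=\int f(\Phi(h,\bt))\,\mu(dh,d\bt)
=\int_\T\N^{(\theta)}[d\bt]\,2\beta\int_\R f(-h-H(\bt),\bt)\,dh .
\]
For each fixed tree $\bt\in\T$, the inner transformation $h\mapsto -h-H(\bt)$ is the composition of the reflection $h\mapsto -h$ with the translation by $-H(\bt)$, both of which preserve Lebesgue measure $dh$ on $\R$; hence $\int_\R f(-h-H(\bt),\bt)\,dh=\int_\R f(h,\bt)\,dh$. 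Substituting this back yields $\int f\,d(\Phi_*\mu)=\int f\,d\mu$, so that $\Phi_*\mu=\mu=2\beta\,dh\,\N^{(\theta)}[d\bt]$, which is exactly the claimed intensity.

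There is essentially no serious obstacle here: the only points deserving a word of care are the measurability of $\Phi$ (which follows from the continuity of $H$) and the $\sigma$-finiteness of $\mu$ needed to invoke the mapping theorem. It is worth observing that $\Phi$ is in fact an involution, since $\Phi(\Phi(h,\bt))=(-(-h-H(\bt))-H(\bt),\bt)=(h,\bt)$; this reflects that the transformation $h_i\mapsto -h_i-H(\tau_i)$ records, for each grafted tree, the height of its top read off in the downward direction, and it is this reflection-plus-translation structure that makes the intensity invariant.
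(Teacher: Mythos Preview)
Your proof is correct and follows essentially the same approach as the paper. The paper computes the Laplace functional directly and performs the change of variables $v=-h-H(\tau)$ inside the exponent, which amounts to reproving the mapping theorem in this special case; you instead invoke the mapping theorem as a black box and verify $\Phi_*\mu=\mu$ via the same change of variables, so the two arguments are the same up to packaging.
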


\begin{proof}
The proof is similar to the one of Lemma \ref{lem:ppp}. 
For every non-negative measurable function $\varphi$ on $\R\times \T$, we have, denoting $\E^{(\theta)}$ the expectation under $\P^{(\theta)}$,
\begin{align*}
\E^{(\theta)}\left[\expp{-\sum_{i\in
  I}\varphi(-h_i-H(\tau_i),\tau_i)}\right] 
& =\exp\left(-\int_{-\infty}^{+\infty}2\beta dh\, 
\N^{(\theta)}\left[1-\expp{-\varphi(-h-H(\tau),\tau)}\right]\right)\\
& =\exp\left(-2\beta \N^{(\theta)}
  \left[\int_{-\infty}^{+\infty}\left(1-\expp{-\varphi(-h-H(\tau),\tau)}\right)
dh\right]\right)\\ 
& =\exp\left(-2\beta
  \N^{(\theta)}\left[\int_{-\infty}^{+\infty} 
\left(1-\expp{-\varphi(v,\tau)}\right)dv\right]\right), 
\end{align*}
by an obvious change of variables, which yields the result.
\end{proof}

We deduce from Lemma \ref{lem:reversed_forest}, Lemma \ref{lem:reversed_CRT} and Theorem \ref{theo:reversed_CRT} the following corollary. 

\begin{cor}
   \label{cor:-forest}
   Let  $\theta\geq   0$.  Let   $\cf$  be   a  Brownian   forest  under
   $\P^{(\theta)}$. Then $\cf$  is a.s. height regular  and the reversed
   forest $\cR(\cf)$ is distributed as $\cf$.
\end{cor}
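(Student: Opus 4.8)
The plan is to combine the three results just cited. A Brownian forest $\cf = ((h_i,\tau_i), i\in I)$ is given by a Poisson point measure on $\R\times\T$ with intensity $2\beta\, dh\, \N^{(\theta)}[d\tau]$. First I would address the height-regularity. By Lemma \ref{lem:reversed_CRT}, each individual tree $\tau_i$ is $\N^{(\theta)}$-a.e.\ in $\T_0$, and since $I$ is countable this gives $\tau_i\in\T_0$ for all $i\in I$ simultaneously, a.s. For the second condition, I would note that conditionally on the trees $(\tau_i, i\in I)$ — equivalently, conditionally on the countable set of heights $(H(\tau_i), i\in I)$ — the marks $(h_i, i\in I)$ form, by the marking property of Poisson point measures, a Poisson point measure on $\R$ with intensity proportional to Lebesgue measure, hence a.s.\ all the $h_i$ are pairwise distinct and, more to the point, the countably many values $h_i$ and the countably many values $h_j + (H(\tau_j) - H(\tau_i))$ (for fixed $i$) avoid each other a.s., since a Lebesgue-diffuse point process a.s.\ avoids any fixed countable set. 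Taking a union over the countably many pairs $(i,j)$ gives that $h_i + H(\tau_i)\ne h_j + H(\tau_j)$ for $i\ne j$, a.s. So $\cf$ is a.s.\ height-regular, and in particular $\cR(\cf)$ is well-defined by the definition given just before Lemma \ref{lem:reversed_forest}.

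Next I would identify the law of $\cR(\cf) = ((-h_i - H(\tau_i), \cR(\tau_i)), i\in I)$. This decomposes as the composition of two operations on the underlying point measure $\sum_{i\in I}\delta_{(h_i,\tau_i)}$: first relocating the mark of each atom from $h_i$ to $-h_i - H(\tau_i)$ (a deterministic measurable map of $(h_i,\tau_i)$), and then replacing each tree $\tau_i$ by $\cR(\tau_i)$ (a measurable map of the tree alone, by Corollary \ref{cor:CR}). For the first operation, Lemma \ref{lem:reversed_forest} gives exactly that $\sum_{i\in I}\delta_{(-h_i - H(\tau_i),\tau_i)}$ is a Poisson point measure on $\R\times\T$ with intensity $2\beta\, dh\, \N^{(\theta)}[d\tau]$. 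For the second operation, I apply the mapping theorem for Poisson point measures in the tree coordinate: pushing forward through $\mathrm{Id}\times\cR$ turns this into a Poisson point measure with intensity $2\beta\, dh\, (\cR_*\N^{(\theta)})[d\tau]$, where $\cR_*\N^{(\theta)}$ is the push-forward of $\N^{(\theta)}$ by $\cR$. By Theorem \ref{theo:reversed_CRT}, $\cR_*\N^{(\theta)} = \N^{(\theta)}$. Hence $\sum_{i\in I}\delta_{(-h_i - H(\tau_i),\cR(\tau_i))}$ is a Poisson point measure with intensity $2\beta\, dh\, \N^{(\theta)}[d\tau]$, which is by definition the law of the point measure defining a Brownian forest under $\P^{(\theta)}$. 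Therefore $\cR(\cf)$ is distributed as $\cf$.

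One point that needs a little care, and which I would flag as the main (mild) obstacle, is the order in which the two operations are applied and their measurability: the relocation $h_i\mapsto -h_i-H(\tau_i)$ depends on $H(\tau_i)$, so it must be performed on the \emph{joint} atom $(h_i,\tau_i)$ before one forgets the tree; one cannot first reverse the trees and then relocate using the original heights. The clean way is exactly the two-step composition above — Lemma \ref{lem:reversed_forest} already does the relocation while keeping the original trees, and only afterwards does one apply $\cR$ tree-by-tree — so there is no circularity. The measurability of $\cR$ on $\T_0$ (Corollary \ref{cor:CR}) is what makes the second mapping step legitimate, and the a.s.\ height-regularity established in the first paragraph is what guarantees the relevant atoms all lie in the domain of $\cR$ and that $\cR(\cf)$ is a genuine (i.e.\ well-defined as a reversed) forest. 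Assembling these observations gives the corollary.
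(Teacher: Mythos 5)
Your argument is correct and follows exactly the route the paper intends: the paper states the corollary as a direct consequence of Lemma \ref{lem:reversed_forest}, Lemma \ref{lem:reversed_CRT} and Theorem \ref{theo:reversed_CRT} without writing out the details, and your two-step decomposition (relocate the atoms via Lemma \ref{lem:reversed_forest}, then apply $\cR$ tree-by-tree via the mapping theorem, Corollary \ref{cor:CR} and Theorem \ref{theo:reversed_CRT}) together with the a.s.\ height-regularity check is precisely the deduction being left to the reader. No gaps.
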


\begin{rem}
   \label{rem:bd}
For every real numbers $s<t$, and every forest $\cf$, we set
\[
M_s^t(\cf)=\Card(\{x\in\cf_\R,\ h_\cf(x)=s\mbox{ and }H(\bt_x)\ge t-s\})
\] 
the number of vertices of $\cf$ at height $s$ that have descendants at time $t$ (excluding the infinite spine).
Corollary \ref{cor:-forest} allows to straightforward recover (and
understand) Theorem 4.3  from \cite{bd:tl} that states that, under
$\P^{(\theta)}$ for any $\theta>0$, the processes
$(M_s^{s+r},s\in\R,r\ge 0)$ and $(M_{s-r}^s,s\in\R,r\ge 0)$ are equally
distributed. Indeed, to recover this result, it is enough to notice that a.s.:
\[
(M_s^{s+r}(\cf),s\in\R,r\ge 0)=(M_{s-r}^s(\cR(\cf)),s\in\R,r\ge 0).
\]
\end{rem}

\bibliographystyle{abbrv}
\bibliography{biblio}

\end{document}